\documentclass[oneside,english]{siamart190516}
\usepackage[T1]{fontenc}
\usepackage[latin9]{inputenc}
\setcounter{secnumdepth}{2}
\setcounter{tocdepth}{2}
\usepackage{array}
\usepackage{booktabs}
\usepackage{units}
\usepackage{multirow}
\usepackage{amsmath}
\usepackage{amssymb}
\usepackage{graphicx}

\makeatletter

\providecommand{\tabularnewline}{\\}

\usepackage{times}
\usepackage{lipsum}
\usepackage{amsfonts}
\usepackage{graphicx}
\usepackage{epstopdf}
\usepackage{algorithmic}
\usepackage[symbol]{footmisc}
\ifpdf%
  \DeclareGraphicsExtensions{.eps,.pdf,.png,.jpg}
\else
  \DeclareGraphicsExtensions{.eps}
\fi

\usepackage{booktabs}

\newsiamremark{remark}{Remark}

\@ifundefined{showcaptionsetup}{}{%
 \PassOptionsToPackage{caption=false}{subfig}}
\usepackage{subfig}
\makeatother

\usepackage{babel}

\begin{document}
\title{Optimal and Low-Memory Near-Optimal Preconditioning of \\
Fully Implicit Runge-Kutta Schemes for Parabolic PDEs}
\author{Xiangmin Jiao\footnotemark[1]\ \footnotemark[2] \and Xuebin Wang\footnotemark[1]
\and Qiao Chen\footnotemark[1] }

\maketitle
\footnotetext[1]{Department of Applied Mathematics \& Statistics and Institute for Advanced Computational Science, Stony Brook University, Stony Brook, NY 11794, USA.}
\footnotetext[2]{Corresponding author. Email: \email{xiangmin.jiao@stonybrook.edu}.}
\headers{Optimal and Near-Optimal Preconditioning of FIRK}{X. Jiao, X. Wang, and Q. Chen}
\begin{abstract}
Runge-Kutta (RK) schemes, especially Gauss-Legendre and some other
fully implicit RK (FIRK) schemes, are desirable for the time integration
of parabolic partial differential equations due to their A-stability
and high-order accuracy. However, it is significantly more challenging
to construct optimal preconditioners for them compared to diagonally
implicit RK (or DIRK) schemes. To address this challenge, we first
introduce mathematically optimal preconditioners called \emph{block
complex Schur decomposition} (\emph{BCSD}), \emph{block real Schur
decomposition} (\emph{BRSD}), and \emph{block Jordan form} (\emph{BJF}),
motivated by block-circulant preconditioners and Jordan form solution
techniques for IRK. We then derive an efficient, near-optimal \emph{singly-diagonal
approximate BRSD }(\emph{SABRSD}) by approximating the quasi-triangular
matrix in real Schur decomposition using an optimized upper-triangular
matrix with a single diagonal value. A desirable feature of SABRSD
is that it has comparable memory requirements and factorization (or
setup) cost as singly DIRK (SDIRK). We approximate the diagonal blocks
in these preconditioning techniques using an incomplete factorization
with (near) linear complexity, such as multilevel ILU, ILU(0), or
a multigrid method with an ILU-based smoother. We apply the block
preconditioners in right-preconditioned GMRES to solve the advection-diffusion
equation in 3D using finite element and finite difference methods.
We show that BCSD, BRSD, and BJF significantly outperform other preconditioners
in terms of GMRES iterations, and SABRSD is competitive with them
and the prior state of the art in terms of computational cost while
requiring the least amount of memory.
\end{abstract}
\begin{keywords}
fully implicit Runge-Kutta; Gauss-Legendre schemes; preconditioning;
block Schur decomposition; low-memory requirement; multilevel methods;
parabolic partial differential equations
\end{keywords}
\begin{AMS}
65F08, 65F50, 65M99
\end{AMS}

\section{Introduction\label{sec:Introduction}}

We consider the solution of time-dependent parabolic partial differential
equations (PDEs), such as the advection-diffusion (AD) equation for
$u:\Omega\times[0,T]\rightarrow\mathbb{R}$,
\begin{equation}
u_{t}-\boldsymbol{\nabla}\cdot(\mu\boldsymbol{\nabla}u)+\boldsymbol{v}\cdot\boldsymbol{\nabla}u=f,\label{eq:advection-diffusion}
\end{equation}
where $\mu\geq0$ denotes the diffusion coefficient, $\boldsymbol{v}$
denotes a velocity field, $f$ denotes some source term, and $u_{t}$
denotes the temporal derivative. In the extreme case of $\boldsymbol{v}=\boldsymbol{0}$,
\eqref{eq:advection-diffusion} reduces to the heat equation. Typically,
the spatial discretization uses finite difference methods (FDM) or
finite element methods (FEM). We assume the spatial discretization
is well-posed, and \eqref{eq:advection-diffusion} is diffusion dominant
in the sense that the cell P\'eclet number $\text{Pe}_{h}=2h\Vert\boldsymbol{v}\Vert/\text{\ensuremath{\mu}}\lesssim1$
\cite{ern2013theory}, where $h$ denotes a characteristic edge length
of the mesh.

These methods convert \eqref{eq:advection-diffusion} into a system
of stiff ordinary differential equations (ODEs), 
\begin{equation}
\boldsymbol{M}\boldsymbol{u}_{t}(t)=-\boldsymbol{K}\boldsymbol{u}(t)+\boldsymbol{f}(t),\label{eq:stiff-ODEs}
\end{equation}
where $\boldsymbol{M}\in\mathbb{R}^{m\times m}$ and $\boldsymbol{K}\in\mathbb{R}^{m\times m}$
denote the mass and stiffness matrices (Section~\ref{subsec:Mass-and-stiffness}),
correspondingly, and $\boldsymbol{f}:[0,T]\rightarrow\mathbb{R}^{m}$.
For long-time integration, fourth and higher-order accurate FDM and
FEM (including spectral elements \cite{karniadakis2005spectral})
are often used. Therefore, it is desirable to solve \eqref{eq:stiff-ODEs}
using high-order implicit time-integration schemes that are A-stable,
so that larger time steps can be used without compromising stability
and accuracy.

The Gauss-Legendre, Radau IIA, Lobatto IIIC, and some other implicit
Runge-Kutta (IRK) are attractive for their A-stability and high-order
accuracy; see, e.g., \cite{butcher2016numerical,hairer1996solving,iserles2009first}.
An $s$-stage Runge-Kutta (RK) scheme can be expressed by the Butcher
tableau $\begin{array}{ccc}
\boldsymbol{c} & \vline & \boldsymbol{A}\\
\hline  & \vline & \boldsymbol{b}^{T}
\end{array}$ \cite{butcher2016numerical}, where $\boldsymbol{A}\in\mathbb{R}^{s\times s}$,
$\boldsymbol{c}\in\mathbb{R}^{s}$, and $\boldsymbol{b}\in\mathbb{R}^{s}$.
For A-stable IRK schemes with an invertible $\boldsymbol{A}$, the
real part of the eigenvalues of $\boldsymbol{A}$ are all positive
\cite[p. 402]{hairer1996solving}. Given an $s$-stage IRK and time
step $\delta t$, \eqref{eq:stiff-ODEs} leads to an $sm\times sm$
linear system $\boldsymbol{\mathcal{A}}\mathcal{U}=\mathcal{B}$ with
\begin{equation}
\boldsymbol{\mathcal{A}}=\boldsymbol{I}_{s}\otimes\boldsymbol{M}+\delta t\boldsymbol{A}\otimes\boldsymbol{K},\label{eq:coeff_matrix}
\end{equation}
where $\boldsymbol{I}_{s}$ is the $s\times s$ identity matrix, $\otimes$
denotes the Kronecker-product operator, and $\mathcal{B}$ depends
on the solution in the previous time step, the boundary conditions,
and the source term $\boldsymbol{f}$; see, e.g., \cite[p. 223]{donea2003finite}.
Unless otherwise noted, we use boldface and regular calligraphic fonts
to indicate block matrices and vectors, respectively. Let $b_{i}$
denote the entries in $\boldsymbol{b}$ and $\boldsymbol{k}_{i}\in\mathbb{R}^{m}$
denote the subvectors in $\mathcal{U}$ corresponding to stage $i$
for $i=1,2,\dots,s$. The solution at time step $k+1$ is then $\boldsymbol{u}_{k+1}=\boldsymbol{u}_{k}+\delta t\sum_{i=1}^{s}b_{i}\boldsymbol{k}_{i}$.
We aim to construct a \emph{right preconditioner }$\boldsymbol{\mathcal{M}}\in\mathbb{R}^{sm\times sm}$
and to solve the preconditioned system
\begin{equation}
\boldsymbol{\mathcal{A}}\boldsymbol{\mathcal{M}}^{-1}\mathcal{V}=\mathcal{B}\label{eq:right-preconditioner}
\end{equation}
using a Krylov-subspace (KSP) method (such as GMRES \cite{saad1986gmres}),
and then $\mathcal{U}=\boldsymbol{\mathcal{M}}^{-1}\mathcal{V}$.

Among the IRK schemes, the fully implicit RK (FIRK) schemes have the
highest-order accuracy. In particular, the $s$-stage Gauss-Legendre
(GL) \cite{iserles2009first} (aka Gauss \cite{hairer1996solving})
are of order $2s$. However, the Butcher matrix $\boldsymbol{A}$
for FIRK is typically full, so it is more challenging to construct
effective and robust preconditioners for them compared to diagonally
implicit Runge-Kutta (or DIRK, sometimes also referred to as semi-implicit
RK \cite{butcher2016numerical}) schemes \cite{najafi2013low}, for
which $\boldsymbol{A}$ and $\boldsymbol{\mathcal{A}}$ are lower
triangular and block lower triangular, respectively. Among the DIRK
schemes, singly DIRK (SDIRK) \cite[IV.6]{hairer1996solving} are often
used, because the diagonal entries in $\boldsymbol{A}$ are all equal
and in turn they lead to lower factorization (or setup) cost and memory
requirement. However, an $s$-stage SDIRK is limited to order-$(s+1)$,
which is far lower than an $s$-stage GL for large $s$. Hence, there
are significant interests in developing effective preconditioners
for FIRK; see, e.g., \cite{chen2014splitting,jameson2017evaluation,pazner2017stage,staff2006Spreconditioning,van1997triangularly}.
Those preconditioners typically have a similar cost as DIRK or SDIRK
per iteration, but the number of KSP iterations may be far from optimal.

In this work, we introduce optimal and limited-memory near-optimal
preconditioning techniques for FIRK. Our approach is novel in two
main aspects. First, we introduce three block preconditioners called
BCSD, BRSD, and BJF, based on the complex and real Schur decompositions
and the Jordan decomposition, respectively. These preconditioners
are motivated by block circulant preconditioners \cite{chan2001strang}
and the Jordan form solution techniques for IRK \cite{bickart1977efficient,butcher1976implementation},
and they are mathematically optimal in exact arithmetic. Second, we
introduce a \emph{singly-diagonal approximate BRSD} (SABRSD) preconditioner,
which reduces the memory requirement and the factorization times compared
to the aforementioned optimal preconditioners. We approximate the
diagonal blocks in these block preconditioners using near-linear-complexity
approximate factorizations, such as the multilevel-ILU-based HILUCSI
\cite{chen2021hilucsi}, the commonly used ILU(0) \cite{kanevsky2007application,pazner2017stage,persson2009scalable},
or a multigrid preconditioner with an ILU-based smoother \cite{hypre2021,hysom2001scalable}.
Our experimental results show that BCSD, BRSD, and BJF significantly
outperform other preconditioners in terms of the number of GMRES iterations.
SABRSD compares favorably to the prior state of the art in terms of
computational cost while requiring the least amount of memory. This
work focuses on linear PDEs with time-invariant coefficients and source
terms; however, it may be extended to nonlinear PDEs with certain
linearization within each time step.

The remainder of the paper is organized as follows. In Section~\ref{sec:Related-works},
we review some background on FDM and FEM, optimal preconditioning,
and some existing preconditioners for FIRK. In Section~\ref{sec:Optimal-preconditioners},
we introduce the mathematically optimal BCSD, BRSD, and BJF. In Section~\ref{sec:Near-Optimal-Preconditioners},
we introduce the near-optimal SABRSD preconditioners based on BRSD
by deriving a novel optimization strategy. In Section~\ref{sec:Numerical-Results},
we present numerical results with the proposed block preconditioners,
verify their (near) optimality, and compare them with the prior state-of-the-art
preconditioners. Finally, Section~\ref{sec:conclusions} concludes
the paper with a brief discussion of future research directions.

\section{Preliminaries and Related Works\label{sec:Related-works}}

We start by reviewing some basics of preconditioning techniques, especially
in the context of FIRK.

\subsection{Optimal preconditioning}

Our aim in this work is to develop optimal and near-optimal right
preconditioners that are stable, accurate, and efficient for FIRK.
We prefer right preconditioning in that it does not affect the norm
of the residual vector, which is preferable for GMRES \cite{ghai2019comparison}.
To measure the stability and accuracy of preconditioners, we use a
definition based on that in \cite{jiao2020approximate}, which generalized
some other measures reviewed in \cite{benzi2002preconditioning}.
\begin{definition}
\label{def:epsilon-accuracy} Given $\boldsymbol{\mathcal{A}}\in\mathbb{R}^{n\times n}$,
$\boldsymbol{\mathcal{G}}$ is an $\epsilon$\emph{-accurate} \emph{right-preconditioning
operator} (\emph{RPO}) for $\boldsymbol{\mathcal{A}}$ if there exists
a nonsingular $\boldsymbol{\mathcal{X}}\in\mathbb{R}^{n\times n}$
(or $\in\mathbb{C}^{n\times n}$) such that
\begin{equation}
\left\Vert \boldsymbol{\mathcal{X}}^{-1}\left(\boldsymbol{\mathcal{A}}\boldsymbol{\mathcal{G}}\right)\boldsymbol{\mathcal{X}}-\begin{bmatrix}\boldsymbol{I}_{r}\\
 & \boldsymbol{0}
\end{bmatrix}\right\Vert =\epsilon<1,\label{eq:norm-bound}
\end{equation}
where $r=\text{rank}(\boldsymbol{\mathcal{A}})$. A class of RPO is
$\epsilon$\emph{-accurate} if $\epsilon$ approaches 0 as its control
parameters are tightened. $\boldsymbol{\mathcal{G}}$ is \emph{stable
}if \emph{$\kappa(\boldsymbol{\mathcal{X}})\leq C$ }for some bounded\emph{
$C>1$}. An $\epsilon$-accurate RPO is \emph{mathematically optimal}
if $\epsilon=0$.
\end{definition}

Definition~\ref{def:epsilon-accuracy} is general in that it applies
to both singular and nonsingular systems. For nonsingular matrices,
which are the focus of this work, the reader can simply interpret
$\boldsymbol{\mathcal{G}}=\boldsymbol{\mathcal{M}}^{-1}$ for a nonsingular
$\boldsymbol{\mathcal{M}}\in\mathbb{R}^{n\times n}$ (or $\in\mathbb{C}^{n\times n}$),
where $\boldsymbol{\mathcal{M}}$ is the \emph{right preconditioner},
and $\boldsymbol{\mathcal{G}}$ is mathematically optimal if and only
if $\boldsymbol{\mathcal{A}}\boldsymbol{\mathcal{M}}^{-1}=\boldsymbol{\mathcal{I}}$.
More generally, however, $\boldsymbol{\mathcal{G}}$ does not need
to be the inverse of a nonsingular matrix \cite{jiao2020approximate};
in this case, an $\epsilon$-accurate RPO guarantees the breakdown-free
of GMRES for consistent systems \cite[Theorem 3.9]{jiao2020approximate}.
More intuitively, since $\left\Vert \boldsymbol{\mathcal{X}}^{-1}\left(\boldsymbol{\mathcal{A}}\boldsymbol{\mathcal{M}}^{-1}\right)\boldsymbol{\mathcal{X}}-\boldsymbol{I}\right\Vert \leq\kappa(\boldsymbol{\mathcal{X}})\left\Vert \boldsymbol{\mathcal{M}}^{-1}\right\Vert \left\Vert \boldsymbol{\mathcal{A}}-\boldsymbol{\mathcal{M}}\right\Vert $,
$\epsilon$-accuracy for nonsingular matrices depends on how close
$\boldsymbol{\mathcal{M}}$ is to $\boldsymbol{\mathcal{A}}$ and
how well conditioned $\boldsymbol{\mathcal{X}}$ and $\boldsymbol{\mathcal{M}}$
are in \eqref{eq:norm-bound}. For this reason, Definition~\ref{def:epsilon-accuracy}
is useful in assessing preconditioners qualitatively. In addition,
we will also use Definition~\ref{def:epsilon-accuracy} to derive
objective functions in optimizing $\boldsymbol{\mathcal{M}}$ quantitatively.

Efficiency is multifaceted, including the factorization of $\boldsymbol{\mathcal{M}}$,
the solve $\boldsymbol{\mathcal{M}}^{-1}\mathcal{\mathcal{V}},$ the
number of KSP iterations, the sparse matrix-vector (SpMV) multiplication
$\boldsymbol{\mathcal{A}}\mathcal{U}$, and the memory requirement.
From a practical point of view, we use the following criterion, assuming
that the solve time of $\boldsymbol{\mathcal{M}}^{-1}\mathcal{\mathcal{V}}$
dominates that of SpMV $\boldsymbol{\mathcal{A}}\mathcal{\mathcal{U}}$.
\begin{definition}
\label{def:near-optimality}A preconditioner $\boldsymbol{\mathcal{M}}$
is \emph{near-optimal }if it is $\epsilon$-accurate and it has (near)
linear complexity to factorize (or setup) and solve (per KSP iteration).
\end{definition}

By linear complexity, we refer to complexities in both time and space.
In the context of IRK, the preconditioner $\boldsymbol{\mathcal{M}}$
can be reused if the mesh and $\delta t$ remain the same. Hence,
the factorization time can be amortized over many time steps. However,
the memory requirement and the solve time for $\boldsymbol{\mathcal{M}}^{-1}\mathcal{V}$
must be (near) linear. Of course, (near) linear time is impractical
in Definition~\ref{def:near-optimality} for $\epsilon=0$, so our
goal is to make $\epsilon$ as small as possible.

\subsection{\label{subsec:Mass-and-stiffness}Mass and stiffness matrices from
FEM and FDM}

Let us first briefly review the finite element methods (FEM). Given
$\Omega\in\mathbb{R}^{d}$ with a piecewise smooth boundary, a set
of trial (basis) functions $\{\phi_{j}(\boldsymbol{x}):\Omega\rightarrow\mathbb{R}\mid1\leq j\leq m\}$
and a set of test functions $\{\psi_{i}(\boldsymbol{x}):\Omega\rightarrow\mathbb{R}\mid1\leq i\leq m\}$,
an FEM (or a \emph{weighted-residual method}) approximates the continuum
solution $u(\boldsymbol{x},t)$ by $\sum_{j=1}^{m}U_{j}\left(t\right)\phi_{j}(\boldsymbol{x})$
and requires the residual to be orthogonal to the test functions $\{\psi_{i}\}$.
After applying integration by parts, \eqref{eq:advection-diffusion}
is converted into a system of ODEs \eqref{eq:stiff-ODEs}, where 
\begin{equation}
\boldsymbol{M}_{\text{FEM}}=\left[\int_{\Omega}\phi_{j}\psi_{i}\,\text{d}\boldsymbol{x}\right]_{ij}\text{ and}\quad\boldsymbol{K}_{\text{FEM}}=\left[\int_{\Omega}\mu\boldsymbol{\nabla}\phi_{j}\cdot\boldsymbol{\nabla}\psi_{i}+\left(\boldsymbol{v}\cdot\boldsymbol{\nabla}\phi_{j}\right)\psi_{i}\,\text{d}\boldsymbol{x}\right]_{ij}.\label{eq:mass-striff-FEM}
\end{equation}
In the Galerkin FEM, $\{\phi_{j}\}=\{\psi_{i}\}$. $\boldsymbol{M}_{\text{FEM}}$
is called the \emph{mass matrix}, and $\boldsymbol{K}_{\text{FEM}}$
is called the \emph{stiffness matrix} when $\boldsymbol{v}=\boldsymbol{0}$. 

An FDM is a collocation-type method, which we can interpret as a \emph{generalized
weighted residual method }\cite{conley2020hybrid}, where the ``test
functions'' are the Dirac delta functions at the nodes. The finite-difference
formula at the $i$th node can be constructed from a corresponding
set of local basis functions $\{\phi_{ij}(\boldsymbol{x}):\Omega\rightarrow\mathbb{R}\mid1\leq i,j\leq m\}$.
We then have 
\begin{equation}
\boldsymbol{M}_{\text{FDM}}=\boldsymbol{I}\quad\text{and}\quad\boldsymbol{K}_{\text{FDM}}=\left[-\boldsymbol{\nabla}\cdot(\mu\boldsymbol{\nabla}\phi_{ij})+\boldsymbol{v}\cdot\boldsymbol{\nabla}\phi_{ij}\right]_{ij}.\label{eq:mass-stiff-FDM}
\end{equation}
For convenience, we also refer to $\boldsymbol{M}_{\text{FDM}}$ as
the mass matrix in FDM, and we informally refer to both $\boldsymbol{K}_{\text{FEM}}$
and $\boldsymbol{K}_{\text{FDM}}$ also as the ``stiffness'' matrix
even when $\boldsymbol{v}\neq\boldsymbol{0}$. For simplicity, we
will omit their subscript when there is no confusion. Note that $-\boldsymbol{K}$
is sometimes referred to as the \emph{Jacobian matrix} in \eqref{eq:stiff-ODEs}
and is often denoted as $\boldsymbol{J}$ in the ODE literature. Assuming
$\mu>0$ and well-posedness of the spatial discretization, $\boldsymbol{K}$
for FEM is positive definite in that $\boldsymbol{K}+\boldsymbol{K}^{T}$
is symmetric and positive definite (SPD). For Galerkin FEM, $\boldsymbol{M}$
is SPD with a Cholesky factorization $\boldsymbol{M}=\boldsymbol{R}^{T}\boldsymbol{R}$.
Hence, $\boldsymbol{M}^{-1}\boldsymbol{K}$ is also positive definite
for its similarity with $\boldsymbol{R}^{-T}\boldsymbol{K}\boldsymbol{R}^{-1}$,
which has the same inertia as $\boldsymbol{K}$ \cite[p. 448]{Golub13MC}.
For FDM, $\boldsymbol{M}^{-1}\boldsymbol{K}=\boldsymbol{K}$, which
is also positive definite on sufficiently fine meshes \cite{jovanovic2013analysis}. 

The preceding spatial discretizations lead to a system of ODEs, which
is stiff because $\kappa(\boldsymbol{K})=\mathcal{O}(h^{-2})$ and
$\kappa(\boldsymbol{M})=\mathcal{O}(1)$ for both FEM on quasiuniform
mesh \cite{ern2013theory} and FDM on uniform mesh \cite{leveque2007finite},
where $h$ denotes some characteristic edge length of the mesh. Due
to the positive definiteness of $\boldsymbol{M}^{-1}\boldsymbol{K}$,
an A-stable IRK ensures an accurate and stable solution of \eqref{eq:stiff-ODEs}
with sufficiently small time steps and grid resolution. An IRK scheme
leads to a linear system with the coefficient matrix \eqref{eq:coeff_matrix}.
Let $a_{ij}$ denote the entries in the Butcher matrix $\boldsymbol{A}$.
If $\boldsymbol{A}$ has positive diagonal entries $a_{ii}$, each
diagonal block of $\boldsymbol{\mathcal{A}}$ has the form $\boldsymbol{M}+a_{ii}\delta t\boldsymbol{K}$,
which is also positive definite.

When preconditioning FIRK, the fundamental question is how to express
an approximate inverse of $\boldsymbol{\mathcal{A}}$ in terms of
the approximate factorization of linear combinations of $\boldsymbol{M}$
and $\boldsymbol{K}$ as in \eqref{eq:coeff_matrix}. The following
relationship between the stiffness and mass matrices will turn out
to be useful in analyzing existing preconditioners.
\begin{lemma}
\label{lem:mass-stiff-relationship}For diffusion-dominated AD equations,
$\Vert\boldsymbol{K}\Vert=\mathcal{O}(h^{-2})\Vert\boldsymbol{M}\Vert$
for well-posed FEM and FDM.
\end{lemma}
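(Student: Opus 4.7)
The plan is to treat FDM and FEM separately and reduce the claim to entry-wise scaling estimates combined with a bounded-bandwidth argument coming from uniformity (or quasi-uniformity) of the mesh. Throughout I interpret $\|\cdot\|$ as the spectral norm, since all induced $p$-norms on a matrix with $\mathcal{O}(1)$ nonzeros per row and column are equivalent up to constants independent of $h$.

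First I would dispose of the FDM case, which is essentially immediate. Since $\boldsymbol{M}_{\text{FDM}} = \boldsymbol{I}$, we have $\|\boldsymbol{M}\| = 1$, so the claim reduces to $\|\boldsymbol{K}_{\text{FDM}}\| = \mathcal{O}(h^{-2})$. For a consistent FDM stencil approximating $-\boldsymbol{\nabla}\cdot(\mu\boldsymbol{\nabla}u) + \boldsymbol{v}\cdot\boldsymbol{\nabla}u$ on a uniform mesh of spacing $h$, standard truncation estimates give each row of $\boldsymbol{K}_{\text{FDM}}$ a bounded number of nonzeros of size $\mathcal{O}(\mu h^{-2})$ from the diffusion part and $\mathcal{O}(\|\boldsymbol{v}\| h^{-1})$ from the advection part. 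Under $\text{Pe}_{h} \lesssim 1$ the advection entries are also $\mathcal{O}(\mu h^{-2})$, and a Gershgorin (or $\|\boldsymbol{A}\|_2 \le \sqrt{\|\boldsymbol{A}\|_1\|\boldsymbol{A}\|_\infty}$) bound yields $\|\boldsymbol{K}_{\text{FDM}}\| = \mathcal{O}(h^{-2})$.

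Next I would handle the FEM case on a quasi-uniform mesh using the classical reference-to-physical element mapping: a single basis function $\phi_j$ has support of measure $\mathcal{O}(h^d)$ and gradient magnitude $\mathcal{O}(h^{-1})$. Hence the integrals in \eqref{eq:mass-striff-FEM} satisfy $|M_{ij}| = \mathcal{O}(h^d)$, $|K^{\text{diff}}_{ij}| = \mathcal{O}(\mu h^{d-2})$, and $|K^{\text{adv}}_{ij}| = \mathcal{O}(\|\boldsymbol{v}\| h^{d-1})$, and the P\'eclet bound once more absorbs the advective contribution into the diffusive one. Quasi-uniformity guarantees that each row and column of $\boldsymbol{M}$ and $\boldsymbol{K}$ has $\mathcal{O}(1)$ nonzeros, so the same induced-norm inequality gives $\|\boldsymbol{M}\| = \mathcal{O}(h^d)$ and $\|\boldsymbol{K}\| = \mathcal{O}(h^{d-2})$; dividing produces the claimed factor $\mathcal{O}(h^{-2})$.

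The main subtlety is the passage from entrywise magnitudes to a spectral-norm estimate, and this is where I would be most careful: the inequality $\|\boldsymbol{A}\|_2 \le \sqrt{\|\boldsymbol{A}\|_1 \|\boldsymbol{A}\|_\infty}$ only yields the desired $h$-independent constants when the number of nonzeros per row and per column is bounded independently of $h$, which is exactly what mesh regularity (a bounded stencil in FDM, shape-regularity plus quasi-uniformity in FEM) provides. A secondary point worth emphasizing is that the diffusion-dominated hypothesis is essential, so that the $\mathcal{O}(h^{-1})$ advective entry-scaling does not contaminate the $\mathcal{O}(h^{-2})$ diffusive one; in the convection-dominated regime an additional factor of $\|\boldsymbol{v}\|/(\mu h)$ would appear, and the statement of the lemma would need to be modified accordingly.
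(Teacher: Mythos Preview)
Your argument is correct and in fact supplies more than the paper does: the paper omits the proof of this lemma entirely and simply refers the reader to standard references (Ern--Guermond for FEM and a unified FEM/FDM analysis). What you have written is essentially the textbook argument those references contain, so in substance you are aligned with the paper's intent.

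One small point worth tightening in the FEM case: from $\|\boldsymbol{K}\|=\mathcal{O}(h^{d-2})$ and $\|\boldsymbol{M}\|=\mathcal{O}(h^{d})$ you cannot directly ``divide'' to get $\|\boldsymbol{K}\|=\mathcal{O}(h^{-2})\|\boldsymbol{M}\|$; you also need the matching lower bound $\|\boldsymbol{M}\|\gtrsim h^{d}$. This is standard (for a quasi-uniform mesh the diagonal entries $M_{ii}=\int_\Omega \phi_i^2\,\mathrm{d}\boldsymbol{x}$ are each $\Theta(h^{d})$, or one invokes the well-known spectral equivalence of $\boldsymbol{M}$ with $h^{d}\boldsymbol{I}$), but it should be stated so that the division step is legitimate. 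With that addition your proof is complete.
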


We omit the proof; see, e.g., \cite{ern2013theory} for FEM and \cite{conley2020hybrid}
for a unified analysis of FEM and FDM. Lemma~\ref{lem:mass-stiff-relationship}
leads to the following proposition.
\begin{proposition}
\label{prop:asymptotic-optimal}For diffusion-dominated AD equations,
$\delta t\boldsymbol{A}\otimes\boldsymbol{K}$ and $\boldsymbol{I}_{s}\otimes\boldsymbol{M}$
converge to mathematically optimal preconditioners as $h^{-2}\delta t$
approches $\infty$ and $0$, respectively.
\end{proposition}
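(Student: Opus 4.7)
The plan is to verify each half of the proposition separately by applying Definition~\ref{def:epsilon-accuracy} with a well-chosen similarity $\boldsymbol{\mathcal{X}}$, computing $\boldsymbol{\mathcal{X}}^{-1}\boldsymbol{\mathcal{A}}\boldsymbol{\mathcal{M}}^{-1}\boldsymbol{\mathcal{X}} - \boldsymbol{I}_{sm}$ via the mixed-product identity for Kronecker products, and bounding its operator norm using Lemma~\ref{lem:mass-stiff-relationship} together with standard Poincar\'e and inverse inequalities. Both candidate preconditioners are invertible by assumption: $\boldsymbol{A}$ is nonsingular because an A-stable IRK scheme has eigenvalues with strictly positive real parts, and $\boldsymbol{K}$ and $\boldsymbol{M}$ are positive definite under the FEM/FDM hypotheses summarised before \eqref{eq:mass-striff-FEM} and \eqref{eq:mass-stiff-FDM}.

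For $\boldsymbol{\mathcal{M}} = \boldsymbol{I}_s \otimes \boldsymbol{M}$, I would take $\boldsymbol{\mathcal{X}} = \boldsymbol{I}_{sm}$ (so $\kappa(\boldsymbol{\mathcal{X}}) = 1$) and apply the mixed-product identity to get
\begin{equation*}
\boldsymbol{\mathcal{A}}(\boldsymbol{I}_s \otimes \boldsymbol{M})^{-1} - \boldsymbol{I}_{sm} = \delta t\,\bigl(\boldsymbol{A} \otimes \boldsymbol{K}\boldsymbol{M}^{-1}\bigr),
\end{equation*}
whose norm is bounded by $\delta t\,\|\boldsymbol{A}\|\,\|\boldsymbol{K}\boldsymbol{M}^{-1}\| = \mathcal{O}(h^{-2}\delta t)$ via Lemma~\ref{lem:mass-stiff-relationship}, and this tends to $0$ as $h^{-2}\delta t \to 0$. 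For $\boldsymbol{\mathcal{M}} = \delta t\,\boldsymbol{A} \otimes \boldsymbol{K}$, I would instead take $\boldsymbol{\mathcal{X}} = \boldsymbol{I}_s \otimes \boldsymbol{M}^{1/2}$, whose condition number is $\kappa(\boldsymbol{M})^{1/2} = \mathcal{O}(1)$ on a quasi-uniform mesh, and compute
\begin{equation*}
\boldsymbol{\mathcal{X}}^{-1}\boldsymbol{\mathcal{A}}(\delta t\,\boldsymbol{A} \otimes \boldsymbol{K})^{-1}\boldsymbol{\mathcal{X}} - \boldsymbol{I}_{sm} = \tfrac{1}{\delta t}\bigl(\boldsymbol{A}^{-1} \otimes \boldsymbol{M}^{1/2}\boldsymbol{K}^{-1}\boldsymbol{M}^{1/2}\bigr).
\end{equation*}
The norm of the SPD factor $\boldsymbol{M}^{1/2}\boldsymbol{K}^{-1}\boldsymbol{M}^{1/2}$ equals the largest generalised eigenvalue of the pencil $(\boldsymbol{M},\boldsymbol{K})$, which is $\mathcal{O}(1)$ for a coercive elliptic form on a bounded domain. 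Under the mild assumption that $h$ is bounded above, the hypothesis $h^{-2}\delta t \to \infty$ then forces $\delta t \to \infty$, so $\epsilon = \mathcal{O}(1/\delta t) \to 0$.

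The main technical obstacle is that Lemma~\ref{lem:mass-stiff-relationship} only controls the ratio $\|\boldsymbol{K}\|/\|\boldsymbol{M}\|$ and does not by itself give a bound on $\|\boldsymbol{M}\boldsymbol{K}^{-1}\|$ or on the extremal generalised eigenvalues of $(\boldsymbol{M},\boldsymbol{K})$; closing the argument requires an independent invocation of a Poincar\'e inequality to guarantee $\lambda_{\min}(\boldsymbol{M}^{-1}\boldsymbol{K}) \geq c > 0$. For FEM this is standard and I would simply cite \cite{ern2013theory}, while for FDM the case $\boldsymbol{M} = \boldsymbol{I}$ makes both bounds immediate \cite{leveque2007finite}.
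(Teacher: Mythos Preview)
The paper's own proof is a single heuristic sentence: by Lemma~\ref{lem:mass-stiff-relationship} the two summands in \eqref{eq:coeff_matrix} satisfy $\|\delta t\,\boldsymbol{A}\otimes\boldsymbol{K}\|/\|\boldsymbol{I}_s\otimes\boldsymbol{M}\|=\mathcal{O}(h^{-2}\delta t)$, so in each limit one term ``can be omitted.'' You have gone further and tried to verify the claim rigorously against Definition~\ref{def:epsilon-accuracy}; your treatment of the case $\boldsymbol{\mathcal{M}}=\boldsymbol{I}_s\otimes\boldsymbol{M}$ is correct and more careful than anything the paper does.

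The second half, however, contains a genuine gap. The implication ``$h$ bounded above and $h^{-2}\delta t\to\infty$ force $\delta t\to\infty$'' is false (take $h=1/n$, $\delta t=1$); you would need $h$ bounded \emph{below}, which is hardly a mild assumption since it forbids mesh refinement. More to the point, your own bound $\epsilon=\mathcal{O}(1/\delta t)$ is essentially sharp: the Poincar\'e inequality you invoke gives $\lambda_{\min}(\boldsymbol{M}^{-1}\boldsymbol{K})=\mathcal{O}(1)$ \emph{independent of $h$}, so $\|\boldsymbol{M}^{1/2}\boldsymbol{K}^{-1}\boldsymbol{M}^{1/2}\|$ (or $\|\boldsymbol{K}^{-1}\|$ for FDM) is $\mathcal{O}(1)$, not $\mathcal{O}(h^2)$. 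Thus $\epsilon\to0$ in the strict sense of Definition~\ref{def:epsilon-accuracy} actually requires $\delta t\to\infty$, which is not implied by $h^{-2}\delta t\to\infty$. What the paper is asserting is only the heuristic relative-norm statement governed by the \emph{largest} eigenvalue of $\boldsymbol{M}^{-1}\boldsymbol{K}$, whereas $\epsilon$-accuracy is controlled by the \emph{smallest}. As a minor aside, $\boldsymbol{M}^{1/2}\boldsymbol{K}^{-1}\boldsymbol{M}^{1/2}$ is not SPD when $\boldsymbol{v}\neq\boldsymbol{0}$, since $\boldsymbol{K}$ then has a skew-symmetric part; this does not affect your norm bound, but the wording should be adjusted.
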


\begin{proof}
As $h^{-2}\delta t$ approaches $\infty$ and $0$, $\boldsymbol{M}$
and $\boldsymbol{K}$ in \eqref{eq:coeff_matrix} can be omitted,
respectively.
\end{proof}
In general, $\delta t\gtrsim\mathcal{O}(h)$ when using implicit schemes,
and hence the limiting case of $h^{-2}\delta t\rightarrow0$ has little
practical value. For very large $h^{-2}\delta t$, Proposition~\ref{prop:asymptotic-optimal}
suggests a \emph{physics-based} (or more precisely, \emph{PDE-based})
\emph{nearest Kronecker product} (PNKP) preconditioner,
\begin{equation}
\boldsymbol{\mathcal{M}}_{\text{PNKP}}=\delta t\boldsymbol{A}\otimes\boldsymbol{K}.\label{eq:PNKP}
\end{equation}
It is convenient to approximate the inverse of PNKP due to the inverse
rule of the Kronecker product \cite[Section 1.3.6]{Golub13MC}, 
\begin{equation}
\left(\boldsymbol{S}\otimes\boldsymbol{T}\right)^{-1}=\boldsymbol{S}^{-1}\otimes\boldsymbol{T}^{-1}.\label{eq:inverse-of-Kronecker-product}
\end{equation}
Hence, in PNKP, one only needs to (approximately) factorize $\boldsymbol{K}$.
However, $\boldsymbol{\mathcal{M}}_{\text{PNKP}}$ is expected to
be effective only as $h^{-2}\delta t$ approaches $\infty$, for example,
when $\delta t\gg\sqrt{h}$, so it is not optimal for general purpose.
In the work, we will focus on moderately large time steps, i.e., $\delta t=\mathcal{O}(h^{\alpha})$
for some $0<\alpha\leq1$. We will use $\boldsymbol{\mathcal{M}}_{\text{PNKP}}$
as a baseline in comparison with other preconditioners in terms of
efficiency and effectiveness for large $h^{-2}\delta t$.

\subsection{State-of-the-Art Preconditioners for FIRK\label{subsec:State-of-the-Art-Preconditioners}}

We briefly review some existing preconditioners of FIRK, which were
representative of state of the art prior to this work. 

\subsubsection{Block diagonal and block triangular preconditioners}

One of the simplest and the most effective preconditioners for FIRK
is the \emph{block Gauss-Seidel} (\emph{BGS}), i.e.,
\begin{equation}
\boldsymbol{\mathcal{M}}_{\text{BGS}}=\boldsymbol{I}_{s}\otimes\boldsymbol{M}+\delta t\boldsymbol{L}\otimes\boldsymbol{K},\label{eq:BGS}
\end{equation}
where $\boldsymbol{L}$ is the lower triangular part of $\boldsymbol{A}$.
From Proposition~\ref{prop:asymptotic-optimal}, it is easy to see
that BGS is mathematically optimal as $h^{-2}\delta t\rightarrow0$,
but not so as $h^{-2}\delta t\rightarrow\infty$. Hence, we expect
BGS to perform well for small time steps but worse for large time
steps. BGS was investigated by van der Houwen and de Swart in \cite{van1997triangularly}
along with other block-triangular preconditioners. They took advantage
of the fact that the Butcher matrices from FIRK, such as those of
Gauss-Legendre, Radau IIA, and Lobatto IIIB, often have a dominant
lower triangular part. Therefore, BGS performs significantly better
than block diagonal \cite{mardal2007order} (BD, aka block Jacobi
\cite{persson2009scalable}) or block upper-triangular preconditioners.
There were some attempts on improving BGS, such as \cite{staff2006Spreconditioning},
by replacing $\boldsymbol{L}$ with a different lower-triangular matrix
$\tilde{\boldsymbol{L}}$ that minimizes $\kappa(\boldsymbol{A}\tilde{\boldsymbol{L}}^{-1})$
under the constraint that $\text{diag}(\tilde{\boldsymbol{L}})=\text{diag}(\boldsymbol{L})$.
Our numerical results, however, show that the preconditioners in \cite{staff2006Spreconditioning}
often under-performed BGS. Hence, although simple, BGS is representative
of the state-of-the-art block triangular preconditioners. $\boldsymbol{\mathcal{M}}_{\text{BGS}}$
has a similar cost per KSP iteration compared to PNKP. However, for
the $s$-stage GL schemes, BGS needs to (approximately) factorize
$\lceil\nicefrac{s}{2}\rceil$ distinct diagonal blocks because the
diagonal of $\boldsymbol{A}$ has $\lceil\nicefrac{s}{2}\rceil$ distinct
values.

\subsubsection{Block circulant preconditioners}

Motivated by a Strang-type circulant preconditioner for Toeplitz-like
matrices \cite{bertaccini2000circulant}, Chan et al. \cite{chan2001strang}
proposed a Strang-type block-circulant (BC) preconditioner for linear
multistep formulas (LMF). Although high-order LMF is less desirable
than implicit RK schemes due to their lack of A-stability \cite{dahlquist1963special},
the idea of BC preconditioner can be adapted to FIRK \cite{chen2015generalized,zhang2011strang}.
In this context, the preconditioner has the form
\begin{align}
\boldsymbol{\mathcal{M}}_{\text{BC}} & =\boldsymbol{I}_{s}\otimes\boldsymbol{M}+\delta t\boldsymbol{C}\otimes\boldsymbol{K}\label{eq:BC-1}\\
 & =(\boldsymbol{F}\otimes\boldsymbol{I}_{n})(\boldsymbol{I}_{s}\otimes\boldsymbol{M}+\delta t\boldsymbol{\Lambda}\otimes\boldsymbol{K})(\boldsymbol{F}^{H}\otimes\boldsymbol{I}_{n})\label{eq:BC-2}
\end{align}
where $\boldsymbol{F}\in\mathbb{C}^{s\times s}$ is the normalized
Fourier matrix (i.e., $\boldsymbol{F}\boldsymbol{F}^{H}=\boldsymbol{I}$),
and $\boldsymbol{C}=\boldsymbol{F}\boldsymbol{\Lambda}\boldsymbol{F}^{H}$
is the Schur decomposition of a circulant matrix that approximates
$\boldsymbol{A}$. From the inverse rule \eqref{eq:inverse-of-Kronecker-product}
and the product rule of the Kronecker product \cite[Section 1.3.6]{Golub13MC},
\begin{equation}
(\boldsymbol{S}\otimes\boldsymbol{X})(\boldsymbol{T}\otimes\boldsymbol{Y})=(\boldsymbol{S}\boldsymbol{T})\otimes(\boldsymbol{X}\boldsymbol{Y}),\label{eq:product-rule}
\end{equation}
we then obtain a closed-form expression
\[
\boldsymbol{\mathcal{M}}_{\text{BC}}^{-1}=(\boldsymbol{F}\otimes\boldsymbol{I}_{n})(\boldsymbol{I}_{s}\otimes\boldsymbol{M}+\delta t\boldsymbol{\Lambda}\otimes\boldsymbol{K})^{-1}(\boldsymbol{F}^{H}\otimes\boldsymbol{I}_{n}),
\]
where the matrix in the middle is block diagonal, of which the diagonal
blocks need to be (approximately) factorized. The effectiveness of
$\boldsymbol{\mathcal{M}}_{\text{BC}}$ depends on how well $\boldsymbol{A}$
can be approximated by a circulant matrix. For Toeplitz-like matrices
in LMF, $\boldsymbol{C}$ can be constructed by minimizing $\Vert\boldsymbol{A}-\boldsymbol{C}\Vert_{F}$
\cite{chan1988optimal}. More generally, $\boldsymbol{C}$ can be
constructed to minimize $\Vert\boldsymbol{I}-\boldsymbol{C}^{-1}\boldsymbol{A}\Vert_{F}$
\cite{tyrtyshnikov1992optimal}. It can be seen that $\boldsymbol{\mathcal{M}}_{\text{BC}}$
is optimal as $h^{-2}\delta t\rightarrow0$. However, $\boldsymbol{\mathcal{M}}_{\text{BC}}$
requires complex arithmetic, and $\boldsymbol{\Lambda}$ in general
has $s$ distinct values. Hence, $\boldsymbol{\mathcal{M}}_{\text{BC}}$
is more expensive than BGS both in memory and computational cost per
KSP iteration. 

\subsubsection{Kronecker product splitting preconditioners}

More recently, H. Chen \cite{chen2014splitting} proposed the so-called
\emph{Kronecker product splitting} (\emph{KPS}) preconditioner, which
has the form
\begin{equation}
\boldsymbol{\mathcal{M}}_{\text{KPS}}(\alpha)=\nicefrac{1}{2\alpha}(\boldsymbol{I}+\alpha\boldsymbol{A})\otimes(\delta t\boldsymbol{K}+\alpha\boldsymbol{M})\label{eq:KPS}
\end{equation}
for some $\alpha>0$, where $\delta t\boldsymbol{K}+\alpha\boldsymbol{M}$
needs to be (approximately) factorized. $\boldsymbol{\mathcal{M}}_{\text{KPS}}$
is a splitting preconditioner in that $\boldsymbol{\mathcal{A}}=\boldsymbol{\mathcal{M}}_{\text{KPS}}(\alpha)-\nicefrac{1}{2\alpha}(\boldsymbol{I}-\alpha\boldsymbol{A})\otimes(\delta t\boldsymbol{K}-\alpha\boldsymbol{M})$.
Chen chose $\alpha$ by minimizing $\max_{\mu\in\lambda(\boldsymbol{A}^{-1})}\left|(\mu-\alpha)/(\mu+\alpha)\right|$,
where $\lambda(\boldsymbol{A}^{-1})$ denotes the eigenvalues of $\boldsymbol{A}^{-1}$.
From Proposition~\ref{prop:asymptotic-optimal}, it is easy to show
that $\boldsymbol{\mathcal{M}}_{\text{KPS}}$ is not optimal as $h^{-2}\delta t$
approaches $0$, neither is it optimal as $h^{-2}\delta t$ approaches
$\infty$; hence, we expect KPS to under-perform BGS and PNKP for
small and large $h^{-2}\delta t$, respectively. In \cite{chen2015generalized},
H. Chen proposed a \emph{generalized Kronecker product splitting}
(\emph{GKPS}) preconditioner 
\begin{equation}
\boldsymbol{\mathcal{M}}_{\text{GKPS}}(\alpha,\beta)=\nicefrac{1}{(\alpha+\beta)}(\boldsymbol{I}+\alpha\boldsymbol{A})\otimes(\delta t\boldsymbol{K}+\beta\boldsymbol{M})\label{eq:GKPS}
\end{equation}
based on the splitting $\boldsymbol{\mathcal{A}}=\boldsymbol{\mathcal{M}}_{\text{GKPS}}(\alpha,\beta)-\nicefrac{1}{(\alpha+\beta)}(\boldsymbol{I}-\beta\boldsymbol{A})\otimes(\delta t\boldsymbol{K}-\alpha\boldsymbol{M})$.
The optimal choice of $\alpha$ and $\beta$ requires the knowledge
of the spectrum of $\boldsymbol{M}^{-1}\boldsymbol{K}$ \cite{chen2015generalized}.
Both KPS and GKPS can be interpreted as \emph{NKP} (\emph{near Kronecker
product} \cite[Section 12.3.7]{Golub13MC}) approximations of $\mathcal{\boldsymbol{A}}$,
and they are cheaper than BGS in terms of factorization cost and memory
requirement. In terms of the number of GMRES iterations, KPS had mixed
performance compared to BGS \cite{chen2014splitting}, so did GKPS
\cite{chen2015generalized}. Since it is difficult to optimize the
parameters in GKPS, we will consider KPS instead of GKPS when comparing
with our methods.

\subsection{Single-level and multilevel incomplete factorization\label{subsec:Single-level-and-multilevel}}

The preconditioners mentioned above require (approximately) factorizing
the diagonal blocks. \emph{Incomplete LU }(\emph{ILU}) is arguably
one of the most promising for this purpose. Given a linear system
$\boldsymbol{\mathcal{A}}\mathcal{X}=\mathcal{B}$, ILU approximately
factorizes $\boldsymbol{\mathcal{A}}$ by
\begin{equation}
\boldsymbol{\mathcal{P}}^{T}\boldsymbol{\mathcal{A}}\boldsymbol{\mathcal{Q}}\approx\boldsymbol{\mathcal{L}}\boldsymbol{\mathcal{D}}\boldsymbol{\mathcal{U}},\label{eq:single-level-ILU}
\end{equation}
where $\boldsymbol{\mathcal{D}}$ is a diagonal matrix, and $\boldsymbol{\mathcal{L}}$
and $\boldsymbol{\mathcal{U}}$ are unit lower and upper triangular
matrices, respectively. The permutation matrices $\boldsymbol{\mathcal{P}}$
and $\boldsymbol{\mathcal{Q}}$ may be constructed statically (such
as using equilibration \cite{duff2001algorithms} or reordering \cite{amestoy1996approximate})
and dynamically (such as by pivoting \cite{saad1988preconditioning,saad2003iterative}).
We refer to \eqref{eq:single-level-ILU} as \emph{single-level ILU}.
The simplest form of single-level ILU is ILU($0$), which does not
have any dynamic pivoting and preserves the sparsity patterns of the
lower and upper triangular parts of $\boldsymbol{\mathcal{P}}^{T}\boldsymbol{\mathcal{A}}\boldsymbol{Q}$
in $\boldsymbol{\mathcal{L}}$ and $\boldsymbol{\mathcal{U}}$, respectively.
ILU(0) has linear time complexity in the number of nonzeros, and it
is often effective for fluid problems \cite{kanevsky2007application,pazner2017stage,persson2009scalable}.
For more challenging problems, ILU with dual thresholding (ILUT) \cite{saad1994ilut}
introduces \emph{fills} based on the levels in the elimination tree
and numerical values. One may also enable dynamic pivoting, leading
to so-called ILUP \cite{saad1988preconditioning} and ILUTP \cite{saad2003iterative}.
However, these more sophisticated variants often have superlinear
time complexity \cite{ghai2019comparison}, and they may suffer from
small pivots or unstable triangular factors \cite{saad2005multilevel}.

\emph{Multilevel incomplete LU} (\emph{MLILU}) is a general algebraic
framework for building block preconditioners. More precisely, a two-level
ILU reads
\begin{equation}
\boldsymbol{\mathcal{P}}^{T}\boldsymbol{\mathcal{A}}\boldsymbol{Q}=\begin{bmatrix}\boldsymbol{B} & \boldsymbol{F}\\
\boldsymbol{E} & \boldsymbol{C}
\end{bmatrix}\approx\boldsymbol{\mathcal{M}}=\begin{bmatrix}\tilde{\boldsymbol{B}} & \tilde{\boldsymbol{F}}\\
\tilde{\boldsymbol{E}} & \boldsymbol{C}
\end{bmatrix}=\begin{bmatrix}\boldsymbol{L} & \boldsymbol{0}\\
\boldsymbol{L}_{E} & \boldsymbol{I}
\end{bmatrix}\begin{bmatrix}\boldsymbol{D} & \boldsymbol{0}\\
\boldsymbol{0} & \boldsymbol{S}_{C}
\end{bmatrix}\begin{bmatrix}\boldsymbol{U} & \boldsymbol{U}_{F}\\
\boldsymbol{0} & \boldsymbol{I}
\end{bmatrix},\label{eq:two-level-ILU}
\end{equation}
where $\boldsymbol{B}\approx\tilde{\boldsymbol{B}}=\boldsymbol{L}\boldsymbol{D}\boldsymbol{U}$
corresponds to a single-level ILU of the leading block, and $\boldsymbol{S}_{C}=\boldsymbol{C}-\boldsymbol{L}_{E}\boldsymbol{D}\boldsymbol{U}_{F}$
is the Schur complement. One can also apply dynamic pivoting \cite{mayer2007multilevel}
or deferring \cite{bollhofer2006multilevel} in MLILU. For this two-level
ILU, $\boldsymbol{\mathcal{P}}\boldsymbol{\mathcal{M}}\boldsymbol{\mathcal{Q}}^{T}$
provides a preconditioner of $\boldsymbol{\mathcal{A}}$. By factorizing
$\boldsymbol{S}_{C}$ in \eqref{eq:two-level-ILU} recursively, we
obtain an MLILU and a corresponding multilevel preconditioner. The
recursion terminates when the Schur complement is sufficiently small,
and then a complete factorization can be employed. Compared to single-level
ILU, MLILU is generally more robust and effective, especially for
indefinite systems \cite{chen2021hilucsi,ghai2019comparison}.

In this work, we utilize a near-linear-complexity multilevel ILU called
HILUCSI, which stands for \emph{Hierarchical ILU-Crout with Scalability-oriented
and Inverse-based dropping} \cite{chen2021hilucsi}. HILUCSI shares
some similarities with other MLILU (such as ILUPACK \cite{bollhofer2011ilupack}).
However, its \emph{scalability-oriented dropping} enables near-linear
time complexity in its factorization and triangular solves in the
number of unknowns. We refer readers to \cite{chen2021hilucsi} for
details. The original implementation of HILUCSI in \cite{chen2021hilucsi}
only supported real arithmetic. For this work, we extended HILUCSI
to support complex arithmetic.

It is worth noting that ILU is decidedly challenging to achieve good
parallel scalability while maintaining its robustness and accuracy.
On massively parallel computers, a common practice is to utilize block
preconditioners in a domain-decomposition fashion (such as in PETSc
\cite{balay2019petsc}) or to use multigrid methods (such as in hypre
\cite{hypre2021}). These techniques often leverage incomplete factorization
(such as the supernodal ILU in SuperLU \cite{li2011supernodal} and
the parallel ILU in Euclid \cite{hysom2001scalable}) on the diagonal
blocks or as smoothers, which seem to strike a reasonable balance
between scalability and robustness.

It is also worth noting that one may attempt to apply a single-level
or multilevel ILU to the coefficient matrix $\boldsymbol{\mathcal{A}}$
in \eqref{eq:coeff_matrix} directly. However, the effectiveness of
virtually all the aforementioned preconditioners decreases as the
size of the matrices increases due to increased droppings. In addition,
it is computationally expensive to apply multilevel ILU to $\boldsymbol{\mathcal{A}}$
directly for many-stage FIRK. Therefore, it is desirable to take advantage
of the Kronecker-product structures, as we will demonstrate in Section~\ref{subsec:Comparison-of-near-linear}. 

\section{Mathematically Optimal Preconditioners\label{sec:Optimal-preconditioners}}

We now introduce two optimal preconditioners based on the Schur decomposition
and an alternative based on the Jordan form.

\subsection{Block complex Schur decomposition}

When $\delta t=\mathcal{O}(h^{\alpha})$ for $\alpha\approx1$, we
must consider both Kronecker products in \eqref{eq:coeff_matrix}.
To this end, we could consider a generalization of the block-circulant
preconditioners. In particular, it is well known that there is a complex
Schur decomposition (CSD) $\boldsymbol{A}=\boldsymbol{U}\boldsymbol{T}\boldsymbol{U}^{H}$
for any $\boldsymbol{A}\in\mathbb{C}^{s\times s}$, where $\boldsymbol{U}\in\mathbb{C}^{s\times s}$
is unitary and $\boldsymbol{T}\in\mathbb{C}^{s\times s}$ is upper
triangular. Hence, we define a \emph{block CSD} (\emph{BCSD}) preconditioner
analogous to $\boldsymbol{\mathcal{M}}_{\text{BC}}$, namely,
\[
\boldsymbol{\mathcal{M}}_{\text{BCSD}}=(\boldsymbol{U}\otimes\boldsymbol{I}_{n})\boldsymbol{\mathcal{T}}(\boldsymbol{U}^{H}\otimes\boldsymbol{I}_{n})\quad\text{with}\quad\boldsymbol{\mathcal{T}}=\boldsymbol{I}_{s}\otimes\boldsymbol{M}+\delta t\boldsymbol{T}\otimes\boldsymbol{K},
\]
where $\boldsymbol{\mathcal{T}}$ is block upper-triangular with $\mathbb{C}^{m\times m}$
diagonal blocks.
\begin{proposition}
\label{prop:BCSD} BCSD preconditioner is mathematically optimal for
$\boldsymbol{\mathcal{A}}$.
\end{proposition}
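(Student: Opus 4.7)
The plan is to show directly that $\boldsymbol{\mathcal{M}}_{\text{BCSD}} = \boldsymbol{\mathcal{A}}$ in exact arithmetic, from which mathematical optimality follows at once via Definition~\ref{def:epsilon-accuracy} by taking $\boldsymbol{\mathcal{X}} = \boldsymbol{\mathcal{I}}$ and observing $\epsilon = 0$. In other words, BCSD is not an approximation of $\boldsymbol{\mathcal{A}}$ but an equivalent factorization of it whose value lies in the fact that the middle block matrix $\boldsymbol{\mathcal{T}}$ is block upper triangular.

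First I would insert the complex Schur decomposition $\boldsymbol{A} = \boldsymbol{U}\boldsymbol{T}\boldsymbol{U}^{H}$ into the Kronecker term $\delta t\,\boldsymbol{A}\otimes\boldsymbol{K}$ of \eqref{eq:coeff_matrix}. Using $\boldsymbol{U}\boldsymbol{U}^H = \boldsymbol{I}_s$, I would also rewrite $\boldsymbol{I}_s\otimes\boldsymbol{M} = (\boldsymbol{U}\boldsymbol{U}^H)\otimes(\boldsymbol{I}_m\boldsymbol{M}\boldsymbol{I}_m)$. Then a double application of the Kronecker product rule \eqref{eq:product-rule} factors both summands as $(\boldsymbol{U}\otimes\boldsymbol{I}_m)(\cdot)(\boldsymbol{U}^H\otimes\boldsymbol{I}_m)$, giving
\[
\boldsymbol{\mathcal{A}} = (\boldsymbol{U}\otimes\boldsymbol{I}_m)\bigl(\boldsymbol{I}_s\otimes\boldsymbol{M} + \delta t\,\boldsymbol{T}\otimes\boldsymbol{K}\bigr)(\boldsymbol{U}^H\otimes\boldsymbol{I}_m) = \boldsymbol{\mathcal{M}}_{\text{BCSD}}.
\]

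Once this identity is in hand, mathematical optimality is immediate: setting $\boldsymbol{\mathcal{G}} = \boldsymbol{\mathcal{M}}_{\text{BCSD}}^{-1}$ and $\boldsymbol{\mathcal{X}} = \boldsymbol{\mathcal{I}}$ in \eqref{eq:norm-bound} yields $\boldsymbol{\mathcal{A}}\boldsymbol{\mathcal{G}} = \boldsymbol{\mathcal{I}}$ and hence $\epsilon = 0$. (The inverse is well defined because each diagonal block of $\boldsymbol{\mathcal{T}}$ has the form $\boldsymbol{M} + \delta t\,T_{ii}\boldsymbol{K}$, and A-stability of the FIRK scheme gives $\operatorname{Re}(T_{ii}) = \operatorname{Re}(\lambda_i(\boldsymbol{A})) > 0$, which combined with the positive definiteness of $\boldsymbol{M}^{-1}\boldsymbol{K}$ discussed in Section~\ref{subsec:Mass-and-stiffness} makes each diagonal block nonsingular; I would include a brief remark to that effect.)

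There is essentially no analytic obstacle here; the only thing that takes care is the bookkeeping of the Kronecker identities, in particular making sure that $(\boldsymbol{U}\otimes\boldsymbol{I}_m)(\boldsymbol{T}\otimes\boldsymbol{K})(\boldsymbol{U}^H\otimes\boldsymbol{I}_m)$ collapses correctly to $(\boldsymbol{U}\boldsymbol{T}\boldsymbol{U}^H)\otimes\boldsymbol{K} = \boldsymbol{A}\otimes\boldsymbol{K}$ rather than something conjugated on the wrong side. The ``content'' of the proposition is therefore less about the equality itself and more about the fact that this reformulation exposes a block-triangular structure amenable to block backsubstitution, which I would flag at the end to motivate the subsequent sections.
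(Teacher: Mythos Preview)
Your proposal is correct and follows exactly the same approach as the paper: the paper's proof is the single line ``Using the product rule \eqref{eq:product-rule}, $\boldsymbol{\mathcal{M}}_{\text{BCSD}}=\boldsymbol{\mathcal{A}}$ with exact factorization,'' and you have simply unpacked that identity in detail. Your additional remarks on well-definedness of the inverse and the block-triangular motivation are fine supplementary commentary but not required for the proof itself.
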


\begin{proof}
Using the product rule \eqref{eq:product-rule}, $\boldsymbol{\mathcal{M}}_{\text{BCSD}}=\boldsymbol{\mathcal{A}}$
with exact factorization.
\end{proof}
\begin{remark} $\boldsymbol{\mathcal{M}}_{\text{BCSD}}$ was motivated
by $\boldsymbol{\mathcal{M}}_{\text{BC}}$ in \eqref{eq:BC-1} and
it overcomes the approximation errors in the latter. $\boldsymbol{\mathcal{M}}_{\text{BC}}$
was originally designed for LMF, for which the matrix $\boldsymbol{I}_{s}$
in \eqref{eq:coeff_matrix} must be replaced by a general matrix $\boldsymbol{B}$.
Note that $\boldsymbol{\mathcal{M}}_{\text{BCSD}}$ can be generalized
to LMF by using a generalized CSD \cite[Section 7.7.2]{Golub13MC}
on $\boldsymbol{A}$ and $\boldsymbol{B}$.\end{remark}

In practice, we need to factorize the diagonal blocks in $\boldsymbol{\mathcal{T}}$
approximately. In this case, $\boldsymbol{\mathcal{M}}_{\text{BCSD}}$
is $\epsilon$-accurate with a sufficiently accurate approximate factorization.
\begin{theorem}
Given $\tilde{\boldsymbol{\mathcal{T}}}=\boldsymbol{\mathcal{T}}+\delta\boldsymbol{\mathcal{T}}$
such that $\left\Vert \tilde{\boldsymbol{\mathcal{T}}}^{-1}\right\Vert \leq C$
for some bounded $C>0$, $\tilde{\boldsymbol{\mathcal{M}}}=(\boldsymbol{U}\otimes\boldsymbol{I}_{n})\tilde{\boldsymbol{\mathcal{T}}}(\boldsymbol{U}^{H}\otimes\boldsymbol{I}_{n})$
is an $\epsilon$-accurate preconditioner with sufficiently small
$\left\Vert \delta\boldsymbol{\mathcal{T}}\right\Vert $.
\end{theorem}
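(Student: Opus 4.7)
The plan is to instantiate Definition 3.1 directly, using the unitary similarity built into $\tilde{\boldsymbol{\mathcal{M}}}$ to reduce the claim to a perturbation bound on $\tilde{\boldsymbol{\mathcal{T}}}$. The natural choice of the similarity transform in \eqref{eq:norm-bound} is $\boldsymbol{\mathcal{X}}=\boldsymbol{U}\otimes\boldsymbol{I}_{n}$. Since $\boldsymbol{U}$ is unitary, so is $\boldsymbol{\mathcal{X}}$ by the product rule \eqref{eq:product-rule}, giving $\kappa_{2}(\boldsymbol{\mathcal{X}})=1$; this simultaneously handles the stability clause in Definition~\ref{def:epsilon-accuracy}.

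Next I would simplify $\boldsymbol{\mathcal{X}}^{-1}\boldsymbol{\mathcal{A}}\tilde{\boldsymbol{\mathcal{M}}}^{-1}\boldsymbol{\mathcal{X}}$. By Proposition~\ref{prop:BCSD} we have $\boldsymbol{\mathcal{A}}=(\boldsymbol{U}\otimes\boldsymbol{I}_{n})\boldsymbol{\mathcal{T}}(\boldsymbol{U}^{H}\otimes\boldsymbol{I}_{n})$, and the inverse rule \eqref{eq:inverse-of-Kronecker-product} together with \eqref{eq:product-rule} yields $\tilde{\boldsymbol{\mathcal{M}}}^{-1}=(\boldsymbol{U}\otimes\boldsymbol{I}_{n})\tilde{\boldsymbol{\mathcal{T}}}^{-1}(\boldsymbol{U}^{H}\otimes\boldsymbol{I}_{n})$. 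Telescoping the $\boldsymbol{U}^{H}\boldsymbol{U}=\boldsymbol{I}_{s}$ factors gives
\begin{equation*}
\boldsymbol{\mathcal{X}}^{-1}\boldsymbol{\mathcal{A}}\tilde{\boldsymbol{\mathcal{M}}}^{-1}\boldsymbol{\mathcal{X}}-\boldsymbol{I}=\boldsymbol{\mathcal{T}}\tilde{\boldsymbol{\mathcal{T}}}^{-1}-\boldsymbol{I}=(\boldsymbol{\mathcal{T}}-\tilde{\boldsymbol{\mathcal{T}}})\tilde{\boldsymbol{\mathcal{T}}}^{-1}=-\delta\boldsymbol{\mathcal{T}}\,\tilde{\boldsymbol{\mathcal{T}}}^{-1}.
\end{equation*}

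Taking norms and applying the hypothesis $\|\tilde{\boldsymbol{\mathcal{T}}}^{-1}\|\leq C$ then gives $\epsilon\leq C\,\|\delta\boldsymbol{\mathcal{T}}\|$, which can be driven below $1$ for sufficiently small $\|\delta\boldsymbol{\mathcal{T}}\|$, fulfilling \eqref{eq:norm-bound}. A brief concluding remark would note that $\epsilon\to 0$ as $\|\delta\boldsymbol{\mathcal{T}}\|\to 0$, so the statement actually shows $\tilde{\boldsymbol{\mathcal{M}}}$ belongs to an $\epsilon$-accurate \emph{class} of preconditioners in the sense of Definition~\ref{def:epsilon-accuracy}, with the approximate factorization tolerance serving as the control parameter.

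I do not foresee a genuine obstacle: the unitary structure of $\boldsymbol{U}\otimes\boldsymbol{I}_{n}$ means no non-trivial conditioning enters, and the Kronecker-product algebra collapses the computation cleanly. The only mild subtlety is being careful that the hypothesis bounds $\|\tilde{\boldsymbol{\mathcal{T}}}^{-1}\|$ (the perturbed matrix) rather than $\|\boldsymbol{\mathcal{T}}^{-1}\|$, which is actually the more useful assumption here since $\tilde{\boldsymbol{\mathcal{T}}}^{-1}$ is what appears in $\tilde{\boldsymbol{\mathcal{M}}}^{-1}$; alternatively, one can invoke a Neumann-series argument to derive such a bound from $\|\boldsymbol{\mathcal{T}}^{-1}\|$ and small enough $\|\delta\boldsymbol{\mathcal{T}}\|$, but the statement as given sidesteps this.
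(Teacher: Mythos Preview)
Your proposal is correct and follows essentially the same route as the paper: choose $\boldsymbol{\mathcal{X}}=\boldsymbol{U}\otimes\boldsymbol{I}_{n}$, collapse the unitary factors to reduce $\boldsymbol{\mathcal{X}}^{-1}\boldsymbol{\mathcal{A}}\tilde{\boldsymbol{\mathcal{M}}}^{-1}\boldsymbol{\mathcal{X}}-\boldsymbol{I}$ to $\boldsymbol{\mathcal{T}}\tilde{\boldsymbol{\mathcal{T}}}^{-1}-\boldsymbol{I}=-\delta\boldsymbol{\mathcal{T}}\,\tilde{\boldsymbol{\mathcal{T}}}^{-1}$, and bound by $C\,\Vert\delta\boldsymbol{\mathcal{T}}\Vert$. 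Your added remarks on $\kappa_{2}(\boldsymbol{\mathcal{X}})=1$ for stability and on the Neumann-series alternative are correct refinements the paper leaves implicit.
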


\begin{proof}
Let $\boldsymbol{\mathcal{X}}=\boldsymbol{U}\otimes\boldsymbol{I}_{n}$
in Definition~\ref{def:epsilon-accuracy}. Then, $\boldsymbol{\mathcal{X}}^{-1}=\boldsymbol{U}^{H}\otimes\boldsymbol{I}_{n}$
and 
\begin{align*}
\left\Vert \boldsymbol{\mathcal{X}}^{-1}\left(\boldsymbol{\mathcal{A}}\tilde{\boldsymbol{\mathcal{M}}}^{-1}\right)\boldsymbol{\boldsymbol{\mathcal{X}}}-\boldsymbol{I}\right\Vert  & =\left\Vert \boldsymbol{\mathcal{X}}^{-1}\left(\boldsymbol{\mathcal{M}}_{\text{BCSD}}\tilde{\boldsymbol{\mathcal{M}}}^{-1}\right)\boldsymbol{\mathcal{X}}-\boldsymbol{I}\right\Vert \\
 & =\left\Vert \boldsymbol{\mathcal{T}}\tilde{\boldsymbol{\mathcal{T}}}^{-1}-\boldsymbol{I}\right\Vert \\
 & =\left\Vert \delta\boldsymbol{\mathcal{T}}\tilde{\boldsymbol{\mathcal{T}}}^{-1}\right\Vert ,
\end{align*}
which is bounded by $\left\Vert \delta\boldsymbol{\mathcal{T}}\right\Vert C$.
Hence, $\tilde{\boldsymbol{\mathcal{M}}}$ is $\epsilon$-accurate
with a bounded $C$ and sufficiently small $\left\Vert \delta\boldsymbol{\mathcal{T}}\right\Vert $.
\end{proof}
When using HILUCSI to factorize its diagonal blocks, the boundedness
of $\left\Vert \tilde{\boldsymbol{\mathcal{T}}}^{-1}\right\Vert $
is ensured since HILUCSI monitors and dynamically controls the condition
numbers of the triangular and diagonal factors using both static and
dynamic permutations. For sufficiently tight dropping thresholds in
HILUCSI, $\Vert\delta\boldsymbol{\mathcal{T}}\Vert\ll1/\left\Vert \tilde{\boldsymbol{\mathcal{T}}}^{-1}\right\Vert $.
In addition, $\Vert\delta\boldsymbol{\mathcal{T}}\Vert$ tends to
be smaller for small $h^{-2}\delta t$ due to the (block) diagonal
dominance of $\boldsymbol{I}_{s}\otimes\boldsymbol{M}$.

\subsection{Ordering of BCSD}

CSD is not unique, and any ordering of the eigenvalues in its diagonal
entries suffices in exact arithmetic. With incomplete factorization
of the diagonal blocks, different decompositions may lead to different
convergence rates of GMRES. To optimize the ordering, we derive an
error analysis for BCSD as follows. 
\begin{lemma}
\label{lem:error-equality}Suppose that the diagonal blocks $\boldsymbol{D}_{i}\in\mathbb{C}^{m\times m}$
of $\boldsymbol{\mathcal{T}}\in\mathbb{C}^{sm\times sm}$ are approximated
by $\tilde{\boldsymbol{D}}_{i}=\boldsymbol{D}_{i}+\delta\boldsymbol{D}_{i}$,
while the off-diagonal blocks $\boldsymbol{T}_{ij}$ are exact. Let
$\delta\boldsymbol{\mathcal{D}}$ be the block diagonal matrix composed
of $\delta\boldsymbol{D}_{i}$, $\tilde{\boldsymbol{\mathcal{T}}}=\boldsymbol{\mathcal{T}}+\delta\boldsymbol{\mathcal{D}}$,
and $\tilde{\boldsymbol{\mathcal{M}}}=(\boldsymbol{U}\otimes\boldsymbol{I}_{m})\tilde{\boldsymbol{\mathcal{T}}}(\boldsymbol{U}^{H}\otimes\boldsymbol{I}_{m})$.
Given a vector $\mathcal{B}\in\mathbb{C}^{m}$, let $\mathcal{Y}=(\boldsymbol{U}^{H}\otimes\boldsymbol{I}_{m})\boldsymbol{\mathcal{M}}_{\text{BCSD}}^{-1}\mathcal{B}$
and $\tilde{\mathcal{Y}}=\mathcal{\mathcal{Y}}+\mathcal{\delta\mathcal{Y}}=(\boldsymbol{U}^{H}\otimes\boldsymbol{I}_{m})\tilde{\boldsymbol{\mathcal{M}}}^{-1}\mathcal{B}$.
Then, 
\begin{equation}
\boldsymbol{\mathcal{T}}\mathcal{\delta Y}=-\delta\boldsymbol{\mathcal{D}}\tilde{\mathcal{Y}}.\label{eq:error-equality}
\end{equation}
\end{lemma}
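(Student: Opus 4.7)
The plan is to prove the equality by eliminating the common right-hand side $\mathcal{B}$ between the exact and perturbed BCSD solves, and then subtracting. The key observation is that both $\boldsymbol{\mathcal{M}}_{\text{BCSD}}$ and $\tilde{\boldsymbol{\mathcal{M}}}$ share the same unitary sandwich $(\boldsymbol{U}\otimes\boldsymbol{I}_m)\cdot(\boldsymbol{U}^H\otimes\boldsymbol{I}_m)$, so after the change of variables $\mathcal{Y} = (\boldsymbol{U}^H\otimes\boldsymbol{I}_m)\boldsymbol{\mathcal{M}}_{\text{BCSD}}^{-1}\mathcal{B}$ (and similarly for $\tilde{\mathcal{Y}}$) the outer factors cancel, leaving only the triangular systems governed by $\boldsymbol{\mathcal{T}}$ and $\tilde{\boldsymbol{\mathcal{T}}}$.

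Concretely, I would first rewrite the definitions of $\mathcal{Y}$ and $\tilde{\mathcal{Y}}$ by premultiplying by $\boldsymbol{\mathcal{M}}_{\text{BCSD}}(\boldsymbol{U}\otimes\boldsymbol{I}_m)$ and $\tilde{\boldsymbol{\mathcal{M}}}(\boldsymbol{U}\otimes\boldsymbol{I}_m)$ respectively. Using the product rule \eqref{eq:product-rule} for Kronecker products together with $\boldsymbol{U}^H\boldsymbol{U} = \boldsymbol{I}_s$, the identities collapse to
\begin{equation*}
(\boldsymbol{U}\otimes\boldsymbol{I}_m)\,\boldsymbol{\mathcal{T}}\,\mathcal{Y} \;=\; \mathcal{B} \;=\; (\boldsymbol{U}\otimes\boldsymbol{I}_m)\,\tilde{\boldsymbol{\mathcal{T}}}\,\tilde{\mathcal{Y}}.
\end{equation*}
Left-multiplying by $\boldsymbol{U}^H\otimes\boldsymbol{I}_m$ yields the clean identity $\boldsymbol{\mathcal{T}}\mathcal{Y} = \tilde{\boldsymbol{\mathcal{T}}}\tilde{\mathcal{Y}}$.

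From there the result is algebraic. I substitute $\tilde{\boldsymbol{\mathcal{T}}} = \boldsymbol{\mathcal{T}} + \delta\boldsymbol{\mathcal{D}}$ and $\tilde{\mathcal{Y}} = \mathcal{Y} + \delta\mathcal{Y}$ and rearrange:
\begin{equation*}
\boldsymbol{\mathcal{T}}\mathcal{Y} \;=\; (\boldsymbol{\mathcal{T}} + \delta\boldsymbol{\mathcal{D}})(\mathcal{Y} + \delta\mathcal{Y}) \;=\; \boldsymbol{\mathcal{T}}\mathcal{Y} + \boldsymbol{\mathcal{T}}\delta\mathcal{Y} + \delta\boldsymbol{\mathcal{D}}\tilde{\mathcal{Y}},
\end{equation*}
so that $\boldsymbol{\mathcal{T}}\delta\mathcal{Y} = -\delta\boldsymbol{\mathcal{D}}\tilde{\mathcal{Y}}$, which is exactly \eqref{eq:error-equality}.

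There is no genuine obstacle here; the proof is essentially bookkeeping. The only place that requires mild care is verifying that the outer unitary factors indeed cancel when passing from $\boldsymbol{\mathcal{M}}_{\text{BCSD}}$- and $\tilde{\boldsymbol{\mathcal{M}}}$-solves to $\boldsymbol{\mathcal{T}}$- and $\tilde{\boldsymbol{\mathcal{T}}}$-solves, which follows from the inverse rule \eqref{eq:inverse-of-Kronecker-product} applied to $\boldsymbol{U}\otimes\boldsymbol{I}_m$ together with the unitarity of $\boldsymbol{U}$. I would note in passing that the hypothesis that only the diagonal blocks are perturbed is not actually used in the derivation of \eqref{eq:error-equality} itself; it is the structural assumption that makes the identity useful in subsequent error analysis (since $\delta\boldsymbol{\mathcal{D}}$ is then block diagonal), but it plays no role in the manipulation above.
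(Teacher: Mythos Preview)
Your proof is correct and follows essentially the same approach as the paper: both establish that $\boldsymbol{\mathcal{T}}\mathcal{Y}$ and $\tilde{\boldsymbol{\mathcal{T}}}\tilde{\mathcal{Y}}$ equal the common transformed right-hand side $(\boldsymbol{U}^{H}\otimes\boldsymbol{I}_{m})\mathcal{B}$ and then subtract. The only cosmetic difference is that the paper writes the identity block-by-block (introducing $\hat{\boldsymbol{b}}_{i}$ and the sums $\sum_{j>i}\boldsymbol{T}_{ij}\boldsymbol{y}_{j}$), whereas you work globally at the matrix level; your remark that the block-diagonal structure of $\delta\boldsymbol{\mathcal{D}}$ is not actually needed for \eqref{eq:error-equality} itself is a valid observation that the block-wise presentation in the paper slightly obscures.
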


For convenience, let $\boldsymbol{y}_{i}$, $\tilde{\boldsymbol{y}}_{i}$
and $\delta\boldsymbol{y}_{i}$ denote the $i$th block in $\mathcal{Y}$,
$\tilde{\mathcal{Y}}$, and $\delta\mathcal{Y}$ corresponding to
$\boldsymbol{D}_{i}$, respectively.
\begin{proof}
Let $\hat{\boldsymbol{b}}_{i}$ denote the $i$th block in $(\boldsymbol{U}^{H}\otimes\boldsymbol{I}_{m})\mathcal{B}$.
By definition,
\[
\tilde{\boldsymbol{D}}_{i}\tilde{\boldsymbol{y}}_{i}+{\textstyle \sum}_{j=i+1}^{s}\boldsymbol{T}_{ij}\tilde{\boldsymbol{y}}_{j}=\hat{\boldsymbol{b}}_{i}=\boldsymbol{D}_{i}\boldsymbol{y}_{i}+{\textstyle \sum}_{j=i+1}^{s}\boldsymbol{T}_{ij}\boldsymbol{y}_{j}.
\]
Hence, $\boldsymbol{D}_{i}\delta\boldsymbol{y}_{i}+\sum_{j=i+1}^{s}\boldsymbol{T}_{ij}\delta\boldsymbol{y}_{j}=-\delta\boldsymbol{D}_{i}\tilde{\boldsymbol{y}}_{i},$
which is equivalent to \eqref{eq:error-equality}.
\end{proof}
Note that $\left\Vert \boldsymbol{\mathcal{T}}^{-1}\right\Vert $
is invariant of the ordering. From a standard norm-wise error analysis
$\left\Vert \mathcal{\delta Y}\right\Vert \leq\left\Vert \boldsymbol{\mathcal{T}}^{-1}\right\Vert \Vert\delta\boldsymbol{\mathcal{D}}\tilde{\mathcal{Y}}\Vert$,
one may conclude that $\left\Vert \mathcal{\delta Y}\right\Vert $
is insensitive to the ordering, assuming $\Vert\tilde{\mathcal{Y}}\Vert\approx\Vert\mathcal{Y}\Vert$.
However, the following asymptotic analysis suggests that the component-wise
errors do depend on the ordering of CSD and may be optimized under
some reasonable assumptions. 
\begin{proposition}
\label{prop:error-bnd}Assume \textup{\emph{$\left\Vert \delta\boldsymbol{\mathcal{D}}\right\Vert =\mathcal{O}(\varepsilon)$
for some small $\varepsilon$ and $\left\Vert \boldsymbol{D}_{i}^{-1}\right\Vert =\mathcal{O}(1)$.
Then,}}\textup{
\begin{align}
\Vert\delta\boldsymbol{y}_{i}\Vert & \leq\left\Vert \boldsymbol{D}_{i}^{-1}\right\Vert \left(\left\Vert \delta\boldsymbol{D}_{i}\boldsymbol{y}_{i}\right\Vert +{\textstyle \sum}_{j=i+1}^{s}\left\Vert \boldsymbol{T}_{ij}\right\Vert \left\Vert \delta\boldsymbol{y}_{j}\right\Vert \right)+\mathcal{O}(\varepsilon^{2}).\label{eq:error-amplication-factor}
\end{align}
}
\end{proposition}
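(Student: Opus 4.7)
The plan is to start from equation \eqref{eq:error-equality} of Lemma~\ref{lem:error-equality}, view $\boldsymbol{\mathcal{T}}$ as a block upper-triangular matrix, and extract the $i$-th block row. This directly yields
\[
\boldsymbol{D}_{i}\delta\boldsymbol{y}_{i} + {\textstyle \sum}_{j=i+1}^{s}\boldsymbol{T}_{ij}\delta\boldsymbol{y}_{j} = -\delta\boldsymbol{D}_{i}\tilde{\boldsymbol{y}}_{i},
\]
exactly as in the proof of the lemma. Solving for $\delta\boldsymbol{y}_{i}$ by left-multiplication with $\boldsymbol{D}_{i}^{-1}$ and applying submultiplicativity of the norm gives the preliminary inequality
\[
\Vert\delta\boldsymbol{y}_{i}\Vert \leq \left\Vert \boldsymbol{D}_{i}^{-1}\right\Vert\left(\Vert\delta\boldsymbol{D}_{i}\tilde{\boldsymbol{y}}_{i}\Vert + {\textstyle \sum}_{j=i+1}^{s}\Vert\boldsymbol{T}_{ij}\Vert\,\Vert\delta\boldsymbol{y}_{j}\Vert\right).
\]

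The next step is to replace $\tilde{\boldsymbol{y}}_{i}$ with $\boldsymbol{y}_{i}$, which differ by $\delta\boldsymbol{y}_{i}$. Writing $\delta\boldsymbol{D}_{i}\tilde{\boldsymbol{y}}_{i} = \delta\boldsymbol{D}_{i}\boldsymbol{y}_{i} + \delta\boldsymbol{D}_{i}\delta\boldsymbol{y}_{i}$ and using the triangle inequality, the cross term is bounded by $\Vert\delta\boldsymbol{D}_{i}\Vert\,\Vert\delta\boldsymbol{y}_{i}\Vert$. To absorb this into the $\mathcal{O}(\varepsilon^{2})$ remainder, I need to verify that $\Vert\delta\boldsymbol{y}_{i}\Vert = \mathcal{O}(\varepsilon)$ for each $i$. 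This follows by a straightforward reverse induction on $i$: at $i=s$ the formula reduces to $\delta\boldsymbol{y}_{s} = -\boldsymbol{D}_{s}^{-1}\delta\boldsymbol{D}_{s}\tilde{\boldsymbol{y}}_{s}$, which is $\mathcal{O}(\varepsilon)$ given $\Vert\boldsymbol{D}_{s}^{-1}\Vert = \mathcal{O}(1)$ and $\Vert\tilde{\boldsymbol{y}}_{s}\Vert = \mathcal{O}(1)$; the inductive step uses the same formula after the lower-indexed terms inherit $\mathcal{O}(\varepsilon)$ bounds. Since $\Vert\boldsymbol{D}_{i}^{-1}\Vert = \mathcal{O}(1)$, multiplying the bound $\Vert\delta\boldsymbol{D}_{i}\Vert\,\Vert\delta\boldsymbol{y}_{i}\Vert = \mathcal{O}(\varepsilon^{2})$ through by $\Vert\boldsymbol{D}_{i}^{-1}\Vert$ still yields $\mathcal{O}(\varepsilon^{2})$, giving the claimed bound \eqref{eq:error-amplication-factor}.

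The main subtlety, rather than obstacle, is the bookkeeping around which quantities are truly $\mathcal{O}(1)$ versus $\mathcal{O}(\varepsilon)$: one must be careful that $\Vert\tilde{\boldsymbol{y}}_{s}\Vert$ is $\mathcal{O}(1)$ (which holds because $\tilde{\boldsymbol{y}}_{s} = \boldsymbol{D}_{s}^{-1}\hat{\boldsymbol{b}}_{s}/(1+\mathcal{O}(\varepsilon))$ or equivalently $\boldsymbol{y}_{s} + \mathcal{O}(\varepsilon)$), so that the induction base case indeed produces an $\mathcal{O}(\varepsilon)$ perturbation. Once this is in hand, the remaining manipulations are purely triangle-inequality estimates with no hidden difficulties, and the result matches the desired inequality with the separated contribution $\Vert\delta\boldsymbol{D}_{i}\boldsymbol{y}_{i}\Vert$ at the unperturbed solution appearing as the leading source term and the summation giving the classical back-substitution amplification by $\Vert\boldsymbol{T}_{ij}\Vert$.
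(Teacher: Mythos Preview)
Your proposal is correct and follows essentially the same route as the paper: both extract the $i$-th block row of \eqref{eq:error-equality}, split $\tilde{\boldsymbol{y}}_{i}=\boldsymbol{y}_{i}+\delta\boldsymbol{y}_{i}$, argue (by reverse induction, or equivalently by ``expanding the recursion'') that $\Vert\delta\boldsymbol{y}_{i}\Vert=\mathcal{O}(\varepsilon)$, and then discard the cross term $\delta\boldsymbol{D}_{i}\delta\boldsymbol{y}_{i}$ as $\mathcal{O}(\varepsilon^{2})$. Your write-up is in fact slightly more careful than the paper's in making explicit the need for $\Vert\tilde{\boldsymbol{y}}_{s}\Vert=\mathcal{O}(1)$ at the base of the induction.
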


\begin{proof}
Due to Lemma~\ref{lem:error-equality},
\begin{equation}
\boldsymbol{D}_{i}\delta\boldsymbol{y}_{i}=-\sum_{j=i+1}^{s}\boldsymbol{T}_{ij}\delta\boldsymbol{y}_{j}-\delta\boldsymbol{D}_{i}\tilde{\boldsymbol{y}}_{i}=-\sum_{j=i+1}^{s}\boldsymbol{T}_{ij}\delta\boldsymbol{y}_{j}-\delta\boldsymbol{D}_{i}\boldsymbol{y}_{i}-\delta\boldsymbol{D}_{i}\delta\boldsymbol{y}_{i}.\label{eq:error-recursion}
\end{equation}
Expanding the recursion, we conclude that $\Vert\delta\boldsymbol{y}_{i}\Vert=\mathcal{O}(\left\Vert \delta\boldsymbol{\mathcal{D}}\right\Vert )=\mathcal{O}(\varepsilon)$
for a constant $s$. Hence, $\left\Vert \delta\boldsymbol{D}_{i}\delta\boldsymbol{y}_{i}\right\Vert \leq\left\Vert \delta\boldsymbol{D}_{i}\right\Vert \left\Vert \delta\boldsymbol{y}_{i}\right\Vert =\mathcal{O}(\varepsilon^{2})$,
and \eqref{eq:error-amplication-factor} then follows from \eqref{eq:error-recursion}.
\end{proof}
Let $d_{i}$ denote the $i$th diagonal block in $\boldsymbol{T}$
in CSD. Note that as $h^{-2}\delta t$ approaches $\infty$, $\left\Vert \boldsymbol{D}_{i}^{-1}\right\Vert $
tends to $\vert d_{i}^{-1}\vert\left\Vert \boldsymbol{K}^{-1}\right\Vert $.
For FDM, $\left\Vert \boldsymbol{K}^{-1}\right\Vert =\mathcal{O}(1)$
(due to a similar argument as Lemma~\ref{lem:mass-stiff-relationship}),
so the assumption of $\left\Vert \boldsymbol{D}_{i}^{-1}\right\Vert =\mathcal{O}(1)$
is reasonable; for FEM, $\left\Vert \boldsymbol{K}^{-1}\right\Vert =\mathcal{O}(1)\left\Vert \boldsymbol{M}^{-1}\right\Vert $
\cite{ern2013theory}, so we need to generalize the assumptions to
take into account $\left\Vert \boldsymbol{M}^{-1}\right\Vert $. In
either case, following a similar procedure as in the backward error
analysis for back solve \cite[pp. 122--127]{trefethen1997numerical},
we can conclude that the recursions in \eqref{eq:error-amplication-factor}
would lead to an amplification factor of $\prod_{k=j}^{s}\left\Vert \boldsymbol{D}_{k}^{-1}\right\Vert \left\Vert \boldsymbol{T}_{jk}\right\Vert $
on $\left\Vert \delta\boldsymbol{D}_{j}\boldsymbol{y}_{j}\right\Vert $
for $i+1\leq j\leq s$ in $\Vert\delta\boldsymbol{y}_{i}\Vert$. Assuming
$\vert d_{i}^{-1}\vert>1$ and $\left\Vert \delta\boldsymbol{D}_{j}\boldsymbol{y}_{j}\right\Vert $
and $\left\Vert \boldsymbol{T}_{ij}\right\Vert $ are insensitive
to reordering, we can decrease these amplification factors by making
$\vert d_{i}^{-1}\vert=1/\vert d_{i}\vert$ as small as possible for
large $i$ in $\boldsymbol{T}$. Hence, we order the CSD so that $\vert d_{i}\vert$
is in ascending order. We will demonstrate the benefit of this ordering
for the five- and six-stage GL schemes in Section~\ref{subsec:Effect-of-ordering}.
To compute this ordering, one can use the MATLAB code in \cite{brandts2002matlab}
to sort the real Schur decomposition (RSD) based on the complex eigenvalues,
and then convert the RSD to CSD using the MATLAB function \textsf{rsf2csf}
\cite{MATLAB2020b}.

\subsection{Block real Schur decomposition}

For IRK, $\boldsymbol{A}\in\mathbb{R}^{s\times s}$, and there is
a \emph{real Schur decomposition} (\emph{RSD}) \cite[Section 7.4.1]{Golub13MC},
\begin{equation}
\boldsymbol{A}=\boldsymbol{Q}\boldsymbol{R}\boldsymbol{Q}^{T},\label{eq:RSD}
\end{equation}
where $\boldsymbol{Q}\in\mathbb{R}^{s\times s}$ is orthogonal (i.e.,
$\boldsymbol{Q}\boldsymbol{Q}^{T}=\boldsymbol{I}$) and $\boldsymbol{R}\in\mathbb{R}^{s\times s}$
is \emph{quasi-triangular}, with 1-by-1 and 2-by-2 diagonal blocks.
We define a \emph{block RSD} (\emph{BRSD}) preconditioner as
\begin{equation}
\boldsymbol{\mathcal{M}}_{\text{BRSD}}=(\boldsymbol{Q}\otimes\boldsymbol{I}_{n})\boldsymbol{\mathcal{R}}(\boldsymbol{Q}^{T}\otimes\boldsymbol{I}_{n})\quad\text{with}\quad\boldsymbol{\mathcal{R}}=\boldsymbol{I}_{s}\otimes\boldsymbol{M}+\delta t\boldsymbol{R}\otimes\boldsymbol{K},\label{eq:BRSD}
\end{equation}
where $\boldsymbol{\mathcal{R}}$ is block quasi-triangular with $\mathbb{R}^{m\times m}$
and $\mathbb{R}^{2m\times2m}$ diagonal blocks.
\begin{proposition}
\label{prop:BRSD}BRSD preconditioner is mathematically optimal for
$\boldsymbol{\mathcal{A}}$.
\end{proposition}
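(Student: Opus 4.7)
The plan is to mimic the proof of Proposition~\ref{prop:BCSD} almost verbatim, since the only structural difference between BCSD and BRSD is that the unitary similarity transform $\boldsymbol{U}\otimes\boldsymbol{I}_{n}$ is replaced by the orthogonal similarity transform $\boldsymbol{Q}\otimes\boldsymbol{I}_{n}$, and the triangular factor $\boldsymbol{T}$ is replaced by the quasi-triangular factor $\boldsymbol{R}$. With exact factorization of the (possibly $2m\times 2m$) diagonal blocks, I only need to verify the algebraic identity $\boldsymbol{\mathcal{M}}_{\text{BRSD}}=\boldsymbol{\mathcal{A}}$.

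First I would expand $\boldsymbol{\mathcal{M}}_{\text{BRSD}}$ by substituting the definition of $\boldsymbol{\mathcal{R}}$ from \eqref{eq:BRSD} and distributing over the sum, yielding two terms: $(\boldsymbol{Q}\otimes\boldsymbol{I}_{n})(\boldsymbol{I}_{s}\otimes\boldsymbol{M})(\boldsymbol{Q}^{T}\otimes\boldsymbol{I}_{n})$ and $\delta t(\boldsymbol{Q}\otimes\boldsymbol{I}_{n})(\boldsymbol{R}\otimes\boldsymbol{K})(\boldsymbol{Q}^{T}\otimes\boldsymbol{I}_{n})$. Next I would apply the product rule \eqref{eq:product-rule} twice to each of these terms, collapsing them to $(\boldsymbol{Q}\boldsymbol{Q}^{T})\otimes\boldsymbol{M}$ and $\delta t(\boldsymbol{Q}\boldsymbol{R}\boldsymbol{Q}^{T})\otimes\boldsymbol{K}$, respectively.

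Finally I would invoke orthogonality $\boldsymbol{Q}\boldsymbol{Q}^{T}=\boldsymbol{I}_{s}$ to reduce the first term to $\boldsymbol{I}_{s}\otimes\boldsymbol{M}$, and invoke the RSD identity $\boldsymbol{Q}\boldsymbol{R}\boldsymbol{Q}^{T}=\boldsymbol{A}$ from \eqref{eq:RSD} to reduce the second term to $\delta t\boldsymbol{A}\otimes\boldsymbol{K}$. Summing the two gives exactly \eqref{eq:coeff_matrix}, so $\boldsymbol{\mathcal{M}}_{\text{BRSD}}=\boldsymbol{\mathcal{A}}$, and hence $\boldsymbol{\mathcal{A}}\boldsymbol{\mathcal{M}}_{\text{BRSD}}^{-1}=\boldsymbol{\mathcal{I}}$, which by Definition~\ref{def:epsilon-accuracy} (taking $\boldsymbol{\mathcal{X}}=\boldsymbol{\mathcal{I}}$ or $\boldsymbol{Q}\otimes\boldsymbol{I}_{n}$) gives $\epsilon=0$, i.e., mathematical optimality.

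There is essentially no obstacle in this argument; the quasi-triangular (rather than strictly triangular) structure of $\boldsymbol{R}$ does not enter the derivation, because the product rule of Kronecker products is indifferent to whether $\boldsymbol{R}$ is triangular, quasi-triangular, or dense. The quasi-triangular structure only matters practically, for block back-substitution when solving with $\boldsymbol{\mathcal{R}}$, but not for the optimality statement itself. Thus the proof should occupy only two or three lines in the final paper, paralleling the proof of Proposition~\ref{prop:BCSD}.
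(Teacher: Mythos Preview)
Your proposal is correct and follows exactly the same approach as the paper: the paper's proof is simply ``It follows from the same argument as Proposition~\ref{prop:BCSD},'' which in turn is the one-line observation that the product rule \eqref{eq:product-rule} gives $\boldsymbol{\mathcal{M}}_{\text{BRSD}}=\boldsymbol{\mathcal{A}}$ with exact factorization. Your expanded version spells out the two-term computation that the paper leaves implicit, and your remark that the quasi-triangular structure of $\boldsymbol{R}$ is irrelevant to the optimality identity is accurate.
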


\begin{proof}
It follows from the same argument as Proposition~\ref{prop:BCSD}. 
\end{proof}
Like $\boldsymbol{\mathcal{M}}_{\text{BCSD}}$, $\boldsymbol{\mathcal{M}}_{\text{BRSD}}$
can also be generalized to LMF by using a generalized RSD \cite[Section 7.7.2]{Golub13MC}.
BRSD requires only real arithmetic. However, its larger $2m\times2m$
blocks are more expensive to factorize than the $m\times m$ complex-valued
blocks in BCSD. In addition, the larger blocks in BRSD may also decrease
its effectiveness compared to BCSD, as we will demonstrate in Section~\ref{subsec:Results-optimal-preconditioners}.

Like CSD, RSD is not unique, and different ordering may lead to different
convergence rate. We can generalize the analysis in Proposition~\ref{prop:error-bnd}
and then assume $\left\Vert \delta\boldsymbol{D}_{j}\boldsymbol{x}_{j}\right\Vert $
and $\left\Vert \boldsymbol{R}_{ij}\right\Vert $ are insensitive
to the ordering for the diagonal blocks $\boldsymbol{D}_{i}\in\mathbb{R}^{2m\times2m}\cup\mathbb{R}^{m\times m}$
and off-diagonal blocks $\boldsymbol{R}_{ij}\in\mathbb{R}^{2m\times2m}\cup\mathbb{R}^{m\times m}$
of $\boldsymbol{\mathcal{R}}\in\mathbb{R}^{sm\times sm}$. Such an
assumption seems reasonable for even-stage GL. For odd-stage GL, we
put the 1-by-1 diagonal block corresponding to the real eigenvalue
at the upper-left corner of $\boldsymbol{R}$ in \eqref{eq:RSD},
and then a similar assumption seems reasonable to the other blocks.
Our numerical experiments show that such an ordering indeed improves
the effectiveness of BRSD for GL schemes. In addition, we observed
that this ordering for GL schemes coincides with sorting the real
parts of the eigenvalues of RSD in descending order. For completeness,
we describe the procedure in Appendix~\ref{sec:sort-permute-RSD}.
 As examples, Table~\ref{tab:sorted-RSD} shows the $\boldsymbol{Q}$
and $\boldsymbol{R}$ matrices of BRSD for the three- and four-stage
GL schemes; for the two-stage GL, $\boldsymbol{Q}=\boldsymbol{I}_{2}$
and $\boldsymbol{R}=\boldsymbol{A}$.

\begin{table}
\caption{\label{tab:sorted-RSD}The $\boldsymbol{Q}$ and $\boldsymbol{R}$
matrices for BRSD from a sorted and permuted real Schur forms of the
Butcher matrices for the three and four-stage GL schemes.}

\centering{}\setlength\tabcolsep{2pt}{\footnotesize{}}%
\begin{tabular}{cccc}
\toprule 
$s$ &  & BRSD ($\boldsymbol{Q}$) & BRSD ($\boldsymbol{R}$)\tabularnewline
\midrule 
3 &  & $\begin{bmatrix}0.0715 & -0.1246 & -0.9896\\
0.1177 & 0.9863 & -0.1157\\
0.9905 & -0.1082 & 0.0852
\end{bmatrix}$ & $\begin{bmatrix}0.2153 & 0.4392 & -0.3537\\
0 & 0.1423 & -0.2704\\
0 & 0.0682 & 0.1423
\end{bmatrix}$\tabularnewline
\midrule 
4 &  & {\small{}$\begin{bmatrix}-0.0117 & 0.2096 & 0.3219 & 0.9232\\
0.0283 & 0.1012 & -0.9458 & 0.3072\\
0.1826 & 0.9561 & 0.0342 & -0.2267\\
0.9827 & -0.1780 & 0.0247 & 0.0442
\end{bmatrix}$} & {\small{}$\begin{bmatrix}0.1584 & 0.4262 & -0.2749 & 0.2277\\
-0.0053 & 0.1584 & -0.2219 & 0.2206\\
0 & 0 & 0.0916 & -0.1834\\
0 & 0 & 0.0729 & 0.0916
\end{bmatrix}$}\tabularnewline
\bottomrule
\end{tabular}{\footnotesize\par}
\end{table}

\subsection{Block Jordan form}

As an alternative to BCSD and BRSD, one might utilize a Jordan decomposition
$\boldsymbol{A}=\boldsymbol{X}\boldsymbol{\Lambda}\boldsymbol{X}^{-1}$
to construct a \emph{block Jordan form} (\emph{BJF}) preconditioner
\begin{equation}
\boldsymbol{\mathcal{M}}_{\text{BJF}}=(\boldsymbol{X}\otimes\boldsymbol{I}_{n})(\boldsymbol{I}_{s}\otimes\boldsymbol{M}+\delta t\boldsymbol{\Lambda}\otimes\boldsymbol{K})(\boldsymbol{X}^{-1}\otimes\boldsymbol{I}_{n}),\label{eq:BJF}
\end{equation}
where $\boldsymbol{X}$ and $\boldsymbol{\Lambda}$ are in $\mathbb{C}^{s\times s}$.
If $\boldsymbol{A}$ is nondefective, as in GL schemes, then $\boldsymbol{\Lambda}$
is diagonal and contains (complex) eigenvalues. This block Jordan
form was developed as a solution technique for IRK independently by
Butcher \cite{butcher1976implementation} and Bickart \cite{bickart1977efficient};
see also \cite[p. 122]{hairer1996solving}. It was also leveraged
recently in \cite{huang2019conditioning} to analyze the condition
numbers of IRK in conjunction with diagonal preconditioners. To the
best of our knowledge, BJF has not been used as a preconditioner for
IRK in the literature. Hence, we consider it as a new block-preconditioner
motivated by Butcher \cite{butcher1976implementation} and Bickart
\cite{bickart1977efficient}. Like BCSD, BJF also requires complex
arithmetic, and it is mathematically optimal in exact arithmetic.
However, BJF cannot be generalized to LMF. More importantly, BJF is
not $\epsilon$-accurate with incomplete factorization for GL-schemes
with large $s$, because $\boldsymbol{X}$ in \eqref{eq:BJF} is not
orthogonal in general, and there is empirical evidence that $\kappa(\boldsymbol{X})$
grows exponentially in $s$. Nevertheless, it is worth considering
BJF for small $s$.

\section{Near-Optimal Preconditioners\label{sec:Near-Optimal-Preconditioners}}

We now introduce a ``singly diagonal'' approximation of BRSD with
reduced cost. We also compare it qualitatively with others.

\subsection{\label{subsec:BARSD}Singly-diagonal approximate BRSD}

We now describe a \emph{singly-diagonal approximate BRSD} (\emph{SABRSD})
preconditioner, 
\begin{equation}
\boldsymbol{\mathcal{M}}_{\text{SABRSD}}=(\boldsymbol{Q}\otimes\boldsymbol{I}_{n})\hat{\boldsymbol{\mathcal{R}}}(\boldsymbol{Q}^{T}\otimes\boldsymbol{I}_{n}),\text{ where }\hat{\boldsymbol{\mathcal{R}}}=\boldsymbol{I}_{s}\otimes\boldsymbol{M}+\delta t\hat{\boldsymbol{R}}\otimes\boldsymbol{K},\label{eq:SBARSD}
\end{equation}
where $\hat{\boldsymbol{R}}$ is a \emph{singly diagonally upper triangular}
(\emph{SDUT}) matrix, i.e., a scalar multiple of a unit upper triangular
matrix. Let $\mathbb{T}_{s}$ denote the set of $s\times s$ SDUT
matrices, i.e., $\mathbb{T}_{s}=\{\boldsymbol{T}\in\mathbb{R}^{s\times s}\mid t_{ii}=t_{jj}\wedge t_{ij}=0,\forall j<i\}$.
We compute $\hat{\boldsymbol{R}}$ to approximate $\boldsymbol{R}$
in \eqref{eq:RSD} using a constrained minimization 
\begin{equation}
\hat{\boldsymbol{R}}=\arg\min_{\hat{\boldsymbol{R}}\in\mathbb{T}_{s}}\left\Vert \boldsymbol{I}-\boldsymbol{R}\hat{\boldsymbol{R}}^{-1}\right\Vert \quad\text{ subject to }\quad\min_{\hat{\boldsymbol{R}}\in\mathbb{T}_{s}}\kappa(\boldsymbol{R}\hat{\boldsymbol{R}}^{-1}).\label{eq:min-norm}
\end{equation}

\begin{remark}

The objective function in \eqref{eq:min-norm} shares some similarity
with that in \cite{tyrtyshnikov1992optimal} for the ``superoptimal''
condition for a circulant left-preconditioner $\boldsymbol{C}$, where
Tyrtyshnikov minimized $\left\Vert \boldsymbol{I}-\boldsymbol{C}^{-1}\boldsymbol{A}\right\Vert _{F}$
without an analogous constraint on $\kappa(\boldsymbol{C}^{-1}\boldsymbol{A})$.
Our constraint on $\kappa(\boldsymbol{R}\hat{\boldsymbol{R}}^{-1})$
is similar to the objective function used by Staff et al. \cite{staff2006Spreconditioning}
in optimizing a block lower-triangular preconditioner (analogous to
$\hat{\boldsymbol{R}}^{T}$ if we replace $\boldsymbol{Q}$ with $\boldsymbol{I}_{s}$),
where they set the diagonal entries of $\hat{\boldsymbol{R}}^{T}$
to be those in $\boldsymbol{A}$ without taking into account $\left\Vert \boldsymbol{I}-\boldsymbol{R}\hat{\boldsymbol{R}}^{-1}\right\Vert $.

\end{remark}

The dual conditions in \eqref{eq:min-norm} were motivated our numerical
experimentation; see Section~\ref{subsec:Effect-of-singly}. We can
also derive them based on Definition~\ref{def:epsilon-accuracy}.
Let us assume $\delta t\gtrsim\mathcal{O}(h)$, so $h^{-2}\delta t\gg1$.
In this case, $\boldsymbol{K}$ dominates $\boldsymbol{M}$, so it
is reasonable to absorb the perturbations due to the approximate factorization
into $\boldsymbol{K}$, so that $\boldsymbol{I}_{s}\otimes\boldsymbol{M}+\delta t\boldsymbol{R}\otimes\boldsymbol{K}$
is approximated by $\boldsymbol{I}_{s}\otimes\boldsymbol{M}+\delta t\hat{\boldsymbol{R}}\otimes\tilde{\boldsymbol{K}}$,
where $\hat{\boldsymbol{R}}=\boldsymbol{R}+\delta\boldsymbol{R}$
and $\tilde{\boldsymbol{K}}=\boldsymbol{K}+\delta\boldsymbol{K}$.
Hence, $\boldsymbol{I}_{s}\otimes\boldsymbol{M}+\delta t\boldsymbol{R}\otimes\boldsymbol{K}\approx\delta t\boldsymbol{R}\otimes\boldsymbol{K}$
and $\boldsymbol{I}_{s}\otimes\boldsymbol{M}+\delta t\hat{\boldsymbol{R}}\otimes\tilde{\boldsymbol{K}}\approx\delta t\hat{\boldsymbol{R}}\otimes\tilde{\boldsymbol{K}}$.
Then,
\begin{align*}
 & \Vert(\boldsymbol{Q}\otimes\boldsymbol{I})\left(\boldsymbol{I}_{s}\otimes\boldsymbol{M}+\delta t\boldsymbol{R}\otimes\boldsymbol{K}\right)(\boldsymbol{I}_{s}\otimes\boldsymbol{M}+\delta t\hat{\boldsymbol{R}}\otimes\tilde{\boldsymbol{K}})^{-1}(\boldsymbol{Q}^{T}\otimes\boldsymbol{I})-\boldsymbol{I}\Vert\\
\approx & \left\Vert (\boldsymbol{R}\otimes\boldsymbol{K})(\hat{\boldsymbol{R}}\otimes\tilde{\boldsymbol{K}})^{-1}-\boldsymbol{I}\right\Vert \\
= & \left\Vert \left((\hat{\boldsymbol{R}}-\delta\boldsymbol{R})\hat{\boldsymbol{R}}^{-1}\right)\otimes\left((\tilde{\boldsymbol{K}}-\delta\boldsymbol{K})\tilde{\boldsymbol{K}}^{-1}\right)-\boldsymbol{I}\right\Vert \\
= & \left\Vert \delta\boldsymbol{R}\hat{\boldsymbol{R}}^{-1}\otimes\boldsymbol{I}_{m}+\boldsymbol{R}\hat{\boldsymbol{R}}^{-1}\otimes\left(\delta\boldsymbol{K}\tilde{\boldsymbol{K}}^{-1}\right)\right\Vert \\
\leq & \left\Vert \boldsymbol{I}-\boldsymbol{R}\hat{\boldsymbol{R}}^{-1}\right\Vert +\left\Vert \boldsymbol{R}\hat{\boldsymbol{R}}^{-1}\right\Vert \otimes\left\Vert \delta\boldsymbol{K}\tilde{\boldsymbol{K}}^{-1}\right\Vert .
\end{align*}
In \eqref{eq:min-norm}, the constraint $\min\kappa(\boldsymbol{R}\hat{\boldsymbol{R}}^{-1})$,
where $\kappa(\boldsymbol{R}\hat{\boldsymbol{R}}^{-1})=\left\Vert \hat{\boldsymbol{R}}\boldsymbol{R}^{-1}\right\Vert \left\Vert \boldsymbol{R}\hat{\boldsymbol{R}}^{-1}\right\Vert $,
ensures a bounded $\left\Vert \boldsymbol{R}\hat{\boldsymbol{R}}^{-1}\right\Vert $,
assuming $\left\Vert \hat{\boldsymbol{R}}\boldsymbol{R}^{-1}\right\Vert $
is nearly a constant. Note that $\kappa(\boldsymbol{R}\hat{\boldsymbol{R}}^{-1})$
is invariant under scaling of $\hat{\boldsymbol{R}}$. The minimization
of $\left\Vert \boldsymbol{I}-\boldsymbol{R}\hat{\boldsymbol{R}}^{-1}\right\Vert $
determines the scaling factor of $\hat{\boldsymbol{R}}$. Due to its
singly-diagonal property, SABRSD has comparable cost as PNKP, assuming
the overhead in multiplying vectors with $\boldsymbol{Q}$ and $\boldsymbol{Q}^{T}$
is negligible. 

\subsection{Ordering and permuting RSD for SABRSD\label{subsec:Ordering-and-permuting}}

Similar to BRSD, the ordering of the eigenvalues along the diagonal
blocks in RSD can affect the effectiveness of SABRSD. To derive an
optimal ordering, assume the lower-triangular part of $\boldsymbol{R}$
is negligible. In this case, Proposition~\ref{prop:error-bnd} suggests
that it is desirable to sort the diagonal entries in $\boldsymbol{R}$
in ascending order, in contrast to descending order in $\boldsymbol{\mathcal{M}}_{\text{BRSD}}$.
To make the lower-triangular part negligible, we make the 2-by-2 diagonal
blocks in $\boldsymbol{R}$ as upper-triangularly dominant as possible,
by making use of the following fact.
\begin{proposition}
\label{prop:min-lower-tri}Let $\boldsymbol{D}=\begin{bmatrix}\alpha & \gamma\\
\beta & \alpha
\end{bmatrix}$ with $\alpha>0$ and $\beta\gamma<0$, and let $\boldsymbol{S}$
be an orthogonal matrix. The lower-triangular component in $\tilde{\boldsymbol{D}}=\boldsymbol{S}\boldsymbol{D}\boldsymbol{S}^{T}$
attains the minimal magnitude when $\boldsymbol{S}=\boldsymbol{I}_{2}$
and $\boldsymbol{S}=\begin{bmatrix}0 & 1\\
1 & 0
\end{bmatrix}$ for $\vert\beta\vert\leq\vert\gamma\vert$ and $\vert\beta\vert>\vert\gamma\vert$
, respectively.
\end{proposition}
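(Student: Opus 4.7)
The plan is to reduce the problem to a one-parameter optimization by parametrizing the $2\times 2$ orthogonal group and writing the lower-triangular component of $\tilde{\boldsymbol{D}}$ explicitly as a function of that parameter. For a $2\times 2$ matrix, the ``lower-triangular component'' is just the $(2,1)$ entry. Every $2\times 2$ orthogonal $\boldsymbol{S}$ is either a rotation $\boldsymbol{S}_r=\begin{bmatrix} c & -s \\ s & c \end{bmatrix}$ or a reflection $\boldsymbol{S}_f=\begin{bmatrix} c & s \\ s & -c \end{bmatrix}$ with $c=\cos\theta$ and $s=\sin\theta$. I would multiply out each case directly; the resulting computation is routine and yields
\[
[\boldsymbol{S}_r\boldsymbol{D}\boldsymbol{S}_r^T]_{2,1} = c^{2}\beta - s^{2}\gamma, \qquad [\boldsymbol{S}_f\boldsymbol{D}\boldsymbol{S}_f^T]_{2,1} = -(c^{2}\beta - s^{2}\gamma),
\]
so both families produce the same absolute value $|c^{2}\beta - s^{2}\gamma|$.

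The critical step is to invoke the hypothesis $\beta\gamma < 0$: it forces $c^{2}\beta$ and $-s^{2}\gamma$ to have the same sign, which collapses the signed expression into a convex combination,
\[
|c^{2}\beta - s^{2}\gamma| \;=\; c^{2}|\beta| + s^{2}|\gamma|.
\]
Since $c^{2}+s^{2}=1$, this quantity is always at least $\min(|\beta|,|\gamma|)$, with the bound attained only at one of the extremes $(c,s)=(1,0)$ or $(c,s)=(0,1)$.

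Finally, I would identify which orthogonal matrix realizes each extreme and match them to the statement. If $|\beta|\le|\gamma|$, the minimum value $|\beta|$ is attained at $(c,s)=(1,0)$ in the rotation family, giving $\boldsymbol{S}=\boldsymbol{I}_2$. If $|\beta|>|\gamma|$, the minimum $|\gamma|$ is attained at $(c,s)=(0,1)$; among the orthogonal representatives that achieve this, the reflection choice yields the swap matrix $\begin{bmatrix} 0 & 1\\ 1 & 0\end{bmatrix}$, which is the one named in the statement. (A rotation with $(c,s)=(0,1)$ would also attain $|\gamma|$ but is not symmetric; the statement selects the symmetric representative.)

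There is no substantive obstacle here; the argument is essentially a $2\times 2$ calculation. The only conceptual point worth flagging is that the sign hypothesis $\beta\gamma<0$ is exactly what turns a potentially signed cancellation into the convex combination $c^{2}|\beta|+s^{2}|\gamma|$, making the minimization transparent; without it, the magnitude $|c^{2}\beta-s^{2}\gamma|$ could actually vanish for some intermediate $\theta$, and the claim would fail.
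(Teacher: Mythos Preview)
Your proof is correct and follows essentially the same approach as the paper: parametrize the $2\times 2$ orthogonal group, compute the $(2,1)$ entry of $\boldsymbol{S}\boldsymbol{D}\boldsymbol{S}^T$ explicitly, and minimize its magnitude. Your version is slightly more complete in that you handle reflections from the outset and use the convex-combination identity $|c^{2}\beta - s^{2}\gamma| = c^{2}|\beta| + s^{2}|\gamma|$ (from $\beta\gamma<0$) to make the minimum transparent, whereas the paper considers only rotations and then appends the swap matrix as a sign flip of the $\theta=\pi/2$ rotation.
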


\begin{proof}
Consider rotation $\boldsymbol{S}_{\theta}=\begin{bmatrix}\cos\theta & \sin\theta\\
-\sin\theta & \cos\theta
\end{bmatrix}$, and then $\tilde{d}_{21}=\gamma-(\beta+\gamma)\sin(2\theta)$. $\left|\tilde{d}_{21}\right|$
is minimized when $\theta=0$ and $\theta=\pi/2$ for $\vert\beta\vert\leq\vert\gamma\vert$
and $\vert\beta\vert>\vert\gamma\vert$, respectively. For $\theta=\pi/2$,
$\boldsymbol{S}=\begin{bmatrix}0 & 1\\
1 & 0
\end{bmatrix}$ flips the sign of the second row in $\boldsymbol{S}_{\theta}$.
\end{proof}
 In Proposition~\ref{prop:min-lower-tri}, $\beta\gamma<0$ is satisfied
for any 2-by-2 diagonal block in RSD; the assumption of $\alpha>0$
is equivalent to requiring the real parts for the eigenvalues of the
Butcher matrix $\boldsymbol{A}$ to be positive, which is valid for
A-stable FIRK with an invertible $\boldsymbol{A}$ \cite[p. 402]{hairer1996solving}.
We will demonstrate the benefit of this ordering strategy in Section~\ref{subsec:Effect-of-ordering}.
 Given the resulting RSD, $\hat{\boldsymbol{R}}$ in \eqref{eq:SBARSD}
can be obtained by using the optimization procedure  described in
Appendix~\ref{sec:SABRSD-code}. Table~\ref{tab:SBARSD-SOBT} gives
the $\boldsymbol{Q}$ and $\hat{\boldsymbol{R}}$ matrices for the
two-, three- and four-stage GL schemes.

\begin{table}
\caption{\label{tab:SBARSD-SOBT}The $\boldsymbol{Q}$ and $\hat{\boldsymbol{R}}$
matrices of SABRSD for GL schemes with two to four stages.}

\centering{}\setlength\tabcolsep{2pt}%
\begin{tabular}{cccc}
\toprule 
$s$ &  & SABRSD ($\boldsymbol{Q}$) & SABRSD ($\hat{\boldsymbol{R}}$)\tabularnewline
\midrule
2 &  & $\begin{bmatrix}0 & 1\\
1 & 0
\end{bmatrix}$ & $\begin{bmatrix}0.3008 & 0.5876\\
0 & 0.3008
\end{bmatrix}$\tabularnewline
\midrule 
3 &  & $\begin{bmatrix}0.0832 & -0.1811 & -0.9799\\
-0.0603 & 0.9806 & -0.1863\\
-0.9947 & -0.0746 & -0.0707
\end{bmatrix}$ & $\begin{bmatrix}0.2339 & -0.5739 & 0.4284\\
0 & 0.2339 & -0.3351\\
0 & 0 & 0.2339
\end{bmatrix}$\tabularnewline
\midrule 
4 &  & {\small{}$\begin{bmatrix}-0.0482 & -0.0225 & 0.3367 & 0.9401\\
0.1222 & -0.0925 & -0.9290 & 0.3368\\
-0.1117 & 0.9879 & -0.0943 & 0.0517\\
-0.9850 & -0.1224 & -0.1211 & -0.0101
\end{bmatrix}$} & {\small{}$\begin{bmatrix}0.212 & -0.535 & 0.3444 & -0.3663\\
0 & 0.212 & -0.3338 & 0.3178\\
0 & 0 & 0.212 & -0.274\\
0 & 0 & 0 & 0.212
\end{bmatrix}$}\tabularnewline
\bottomrule
\end{tabular}
\end{table}

\subsection{Alternative approximations to BRSD}

It is well known that the Butcher matrix $\boldsymbol{A}$ is strongly
lower-triangular dominant in GL schemes \cite{van1997triangularly}.
Hence, one may fix $\boldsymbol{Q}$ in SABRSD to be the ``flipped''
identity matrix $\boldsymbol{P}=[\boldsymbol{e}_{s},\boldsymbol{e}_{s-1},\dots,\boldsymbol{e}_{1}]$,
so that $\boldsymbol{P}\boldsymbol{A}\boldsymbol{P}^{T}$ is nearly
upper triangular. Let $\hat{\boldsymbol{L}}=\boldsymbol{P}\hat{\boldsymbol{R}}\boldsymbol{P}$
minimize $\Vert\boldsymbol{I}-\boldsymbol{P}\boldsymbol{A}\boldsymbol{P}\hat{\boldsymbol{R}}^{-1}\Vert=\Vert\boldsymbol{I}-\boldsymbol{A}\hat{\boldsymbol{L}}^{-1}\Vert$
subject to the diagonal entries in $\hat{\boldsymbol{L}}$ being equal.
We then obtain a simple block preconditioner
\begin{equation}
\boldsymbol{\mathcal{M}}_{\text{SOBT}}=\boldsymbol{I}_{s}\otimes\boldsymbol{M}+\delta t\hat{\boldsymbol{L}}\otimes\boldsymbol{K},\label{eq:OBT-precond}
\end{equation}
which we refer to as a \emph{singly-diagonal optimized block triangular}
(\emph{SOBT}) preconditioner. SOBT is similar to the optimized block
triangular preconditioners in \cite{staff2006Spreconditioning}, except
that we use a different constrained minimization \eqref{eq:min-norm}.
One can compute SOBT by modifying the procedure described in Appendix~\ref{sec:SABRSD-code}
to minimize $\left\Vert \boldsymbol{I}-\boldsymbol{A}\hat{\boldsymbol{L}}^{-1}\right\Vert $.
Compared to SABRSD, SOBT does not consider the ordering of the diagonal
entries in the original matrix. 

As another alternative approximation of BRSD, one may also simply
use the upper-triangular part of BRSD as $\hat{\boldsymbol{R}}$ with
the same ordering as SABRSD, instead of minimizing the objective function
\eqref{eq:min-norm}. We refer to this second alternative as \emph{truncated
BRSD} (or \emph{TBRSD}). Compared to SABRSD, TBRSD does not minimize
the errors based on $\epsilon$-accuracy in Definition~\ref{def:epsilon-accuracy};
it is also more expensive in that it requires factoring $\lceil\nicefrac{s}{2}\rceil$
diagonal blocks. In Section~\ref{subsec:Effect-of-singly}, we will
compare SABRSD with SOBT and TBRSD to assess the effectiveness of
our singly-diagonal optimization. 

\subsection{Qualitative comparison of preconditioners}

We compare the advantages and disadvantages of some block preconditioners
qualitatively in terms of near-optimality, the number of real or complex
factorizations, and the number of real or complex solves of the diagonal
blocks. Table~\ref{tab:Comparison-of-preconditioners} summarizes
the comparison of nine block preconditioners, where the first five
were reviewed in Section~\ref{sec:Related-works} and the others
were introduced in this work. Among these preconditioners, BCSD, BRSD,
and BJF are mathematically optimal block preconditioners for GL, assuming
complete factorization and exact arithmetic. Among these three, BJF
suffers from instability for large $s$. PNKP and SABRSD are near-optimal
for large and intermediate time steps. In terms of factorization cost,
we assume a two-way symmetry along its diagonal (i.e., $a_{i,i}=a_{s-i,s-i}$
for $i=1,\dots,\lfloor s\rfloor$) for BD and BGS; we assume that
there are $\lfloor\nicefrac{s}{2}\rfloor$ conjugate pairs of complex
eigenvalues for BC, BJF, and BCSD, so that the factorization cost
can be approximately halved by taking advantage of the fact that $\overline{\boldsymbol{M}^{-1}\boldsymbol{b}}=\overline{\boldsymbol{M}}^{-1}\overline{\boldsymbol{b}}$.
Both of these properties can be verified for GL schemes. Under these
assumptions, assuming near-linear scaling of the approximate factorization,
BRSD requires about twice as much memory and factorization time as
BD and BGS; the costs of BC, BJF, and BCSD are comparable to BRSD.
SABRSD has the lowest memory requirement and factorization cost, comparable
to PNKP and (G)KPS. In terms of the solve time, SABRSD is comparable
with BGS, PNKP, and (G)KPS; BRSD is about twice as expensive; BCSD
requires two to four times floating-point operations due to complex
arithmetic, and similarly for BC and BJF. We note that BD, (G)KPS,
BC, BJF, and PNKP enjoy $s$ independent solves on the diagonal blocks
(aka stage-parallelism \cite{pazner2017stage}), which may be advantageous
for multithreaded implementation on multi-core computers. In contrast,
the others require a block back solve, for which the $s$ solves need
to be performed in a serial order.

\begin{table}
\caption{\label{tab:Comparison-of-preconditioners}Comparison of preconditioners
for $s$-stage GL schemes. }

\centering{}\setlength\tabcolsep{4pt}%
\begin{tabular}{cccc}
\toprule 
preconditioner & near-optimal & factorization & solves\tabularnewline
\midrule
block diagonal \cite{pazner2017stage} & \multirow{2}{*}{$h^{-2}\delta t\rightarrow0$} & \multirow{2}{*}{$\lceil\nicefrac{s}{2}\rceil\times$ $\mathbb{R}^{m\times m}$} & \multirow{2}{*}{$s\times$ $\mathbb{R}^{m\times m}$}\tabularnewline
block Gauss-Seidel \cite{van1997triangularly} &  &  & \tabularnewline
\midrule 
(G)KPS \cite{chen2014splitting,chen2015generalized} & $-$ & $1\times$ $\mathbb{R}^{m\times m}$ & $s\times$ $\mathbb{R}^{m\times m}$\tabularnewline
\midrule 
block circulant \cite{chan2001strang,zhang2011strang} & $-$ & \multirow{3}{*}{$\begin{array}{c}
\lfloor\nicefrac{s}{2}\rfloor\times\mathbb{C}^{m\times m}\,\&\\
(s\text{ mod }2)\times\mathbb{R}^{m\times m}
\end{array}$} & \multirow{3}{*}{$s\times$ $\mathbb{C}^{m\times m}$}\tabularnewline
block Jordan form \cite{huang2019conditioning} & small $s$ &  & \tabularnewline
block CSD (BCSD) & optimal &  & \tabularnewline
\midrule
block RSD (BRSD) & optimal & \multicolumn{2}{c}{$\lfloor\nicefrac{s}{2}\rfloor\times$ $\mathbb{R}^{2m\times2m}$
\& $(s\text{ mod }2)\times$ $\mathbb{R}^{m\times m}$}\tabularnewline
\midrule 
PNKP & $h^{-2}\delta t\rightarrow\infty$ & \multirow{2}{*}{$1\times$ $\mathbb{R}^{m\times m}$} & \multirow{2}{*}{$s\times$ $\mathbb{R}^{m\times m}$}\tabularnewline
SABRSD & $\delta t\gtrsim\mathcal{O}(h)$ &  & \tabularnewline
\bottomrule
\end{tabular}
\end{table}

For BCSD, BRSD, PNKP, and SABRSD, the conclusions in Table~\ref{tab:Comparison-of-preconditioners}
hold for other FIRK schemes, assuming their Butcher matrices have
$\lfloor\nicefrac{s}{2}\rfloor$ distinct complex conjugate pairs
of eigenvalues. However, some conclusions for other preconditioners
may change. In particular, the two-way symmetry of the diagonal entries
does not hold for Radau and some other FIRK schemes, so the number
of factorizations for BD and BGS would be $s$ instead of $\lceil\nicefrac{s}{2}\rceil$.

\section{Numerical Experimentation\label{sec:Numerical-Results}}

In this section, we report numerical experimentation with the preconditioners
introduced in Sections~\ref{sec:Optimal-preconditioners} and \ref{sec:Near-Optimal-Preconditioners},
and compare them with some others that we reviewed in Section~\ref{subsec:State-of-the-Art-Preconditioners}.
We discretized the 3D AD equation \eqref{eq:advection-diffusion}
using both FEM and FDM on $\Omega=[0,1]^{3}$ with $\mu=1$, and we
set velocities to $\boldsymbol{v}=[10,10,10]^{T}$ and $\boldsymbol{v}=[1,1,1]^{T}$
for FEM and FDM, respectively. We conducted our numerical experiments
using the method of \emph{manufactured solutions}. Specifically, we
used the following manufactured solution
\begin{equation}
u\left(x,y,z,t\right)=\sin\left(1.5\pi t\right)\sin\left(\pi x\right)\sin\left(\pi y\right)\sin\left(\pi z\right),\label{eq:3d-manufactured-sol}
\end{equation}
which generalized the 2D test case in \cite[Section 5.2]{chen2014splitting}.
This manufactured solution has homogeneous boundary conditions at
all time steps. For FEM, we used FEniCS 2019.1.0 \cite{alnaes2015fenics,logg2012automated}
to assemble the mass matrix $\boldsymbol{M}_{\text{FEM}}$ and the
stiffness matrix $\boldsymbol{K}_{\text{FEM}}$ in \eqref{eq:mass-striff-FEM}
using a series of tetrahedral meshes with quadratic, cubic, and quartic
(i.e., $P_{2}$, $P_{3}$, and $P_{4}$, correspondingly) Lagrange
elements. Table~\ref{tab:fem-stat} shows the degrees of freedom
(DOF) and the numbers of nonzeros (NNZ) in $\boldsymbol{M}_{\text{FEM}}$
and $\boldsymbol{K}_{\text{FEM}}$ for each testing case. For FDM,
we used our in-house MATLAB code to construct $\boldsymbol{K}_{\text{FDM}}$
in \eqref{eq:mass-stiff-FDM} on a series of equidistant structured
meshes with $h=\nicefrac{1}{32}$, $\nicefrac{1}{64}$, and $\nicefrac{1}{128}$,
where one-sided stencils were used near the boundary. Table~\ref{tab:fdm-stat}
shows the DOFs and NNZ in $\boldsymbol{K}_{\text{FDM}}$ for the testing
cases. In addition, Tables~\ref{tab:fem-stat} and \ref{tab:fdm-stat}
show the cell P{\'e}clet numbers \cite[p. 251]{ern2013theory}, $\text{Pe}_{h}=2h\Vert\boldsymbol{v}\Vert/\mu$,
where we chose $h$ to be the distance between nearest nodes since
we used equidistant Lagrange elements and uniform meshes. It is well
known that when $\text{Pe}_{h}\gg1$ (i.e., when the flow is advection-dominated),
continuous FEM and FDM may suffer from instabilities, so we consider
the cases that are (marginally) diffusion-dominated, with $\text{Pe}_{h}$
spanning two orders of magnitude. For the heat equations, for which
$\text{Pe}_{h}=0$, the behaviors are similar to those with small
$\text{Pe}_{h}$, so we omit them from our comparison.

We discretized the resulting systems of ODEs using the GL schemes.
For an $s$-stage GL scheme, the DOFs of $\boldsymbol{\mathcal{A}}$
in \eqref{eq:coeff_matrix} are $s$ times of those in Tables~\ref{tab:fem-stat}
and~\ref{tab:fdm-stat}. For example, for the five-stage GL scheme
on the finest mesh, there are more than ten million DOFs in $\boldsymbol{\mathcal{A}}$
for both FEM and FDM.

\begin{table}
\caption{\label{tab:fem-stat}Statistics of testing cases using 3D FEM discretization.
DOF and NNZ stand for degrees of freedom and numbers of nonzeros in
$\boldsymbol{K}_{\text{FEM}}$ in \eqref{eq:mass-striff-FEM}, respectively.}

\centering{}\setlength\tabcolsep{2pt}%
\begin{tabular}{ccccccccccccc}
\toprule 
elem. &  & \multicolumn{3}{c}{$\ell=1$} &  & \multicolumn{3}{c}{$\ell=2$} &  & \multicolumn{3}{c}{$\ell=3$}\tabularnewline
\cmidrule{3-5} \cmidrule{4-5} \cmidrule{5-5} \cmidrule{7-9} \cmidrule{8-9} \cmidrule{9-9} \cmidrule{11-13} \cmidrule{12-13} \cmidrule{13-13} 
type &  & DOF & NNZ & $\text{Pe}{}_{h}$ &  & DOF & NNZ & $\text{Pe}{}_{h}$ &  & DOF & NNZ & $\text{Pe}{}_{h}$\tabularnewline
\midrule 
$P_{2}$ &  & 3,375 & 79,183 & 2.17 &  & 29,791 & 776,879 & 1.08 &  & 250,047 & 6,853,231 & 0.54\tabularnewline
$P_{3}$ &  & 12,167 & 497,723 & 1.44 &  & 103,823 & 4,637,699 & 0.72 &  & 857,375 & 39,951,635 & 0.36\tabularnewline
$P_{4}$ &  & 29,791 & 1,936,927 & 1.08 &  & 250,047 & 17,530,591 & 0.54 &  & 2,048,383 & 148,960,351 & 0.27\tabularnewline
\bottomrule
\end{tabular}
\end{table}
\begin{table}
\caption{\label{tab:fdm-stat}Statistics of testing cases using 3D FDM discretization.
DOF and NNZ stand for degrees of freedom and numbers of nonzeros in
$\boldsymbol{K}_{\text{FDM}}$ in \eqref{eq:mass-stiff-FDM}, respectively.}

\centering{}\setlength\tabcolsep{0.75pt}%
\begin{tabular}{ccccccccccccc}
\toprule 
stencil &  & \multicolumn{3}{c}{$h=\nicefrac{1}{32}$} &  & \multicolumn{3}{c}{$h=\nicefrac{1}{64}$} &  & \multicolumn{3}{c}{$h=\nicefrac{1}{128}$}\tabularnewline
\cmidrule{3-5} \cmidrule{4-5} \cmidrule{5-5} \cmidrule{7-9} \cmidrule{8-9} \cmidrule{9-9} \cmidrule{11-13} \cmidrule{12-13} \cmidrule{13-13} 
order &  & DOF & NNZ & $\text{Pe}{}_{h}$ &  & DOF & NNZ & $\text{Pe}{}_{h}$ &  & DOF & NNZ & $\text{Pe}{}_{h}$\tabularnewline
\midrule 
$p=2$ &  & \multirow{3}{*}{29,791} & 202,771 & \multirow{3}{*}{1.08e-01} &  & \multirow{3}{*}{250,047} & 1,726,515 & \multirow{3}{*}{5.41e-02} &  & \multirow{3}{*}{2,048,383} & 14,241,907 & \multirow{3}{*}{2.71e-02}\tabularnewline
$p=4$ &  &  & 381,517 &  &  &  & 3,226,797 &  &  &  & 26,532,205 & \tabularnewline
$p=6$ &  &  & 560,263 &  &  &  & 4,727,079 &  &  &  & 38,822,503 & \tabularnewline
\bottomrule
\end{tabular}
\end{table}

In all of our tests, we solved systems $\boldsymbol{\mathcal{A}}\mathcal{U}=\mathcal{B}$
using our in-house MATLAB implementation of right-preconditioned restarted
GMRES based on \cite{saad1993flexible}. We limited the dimension
of the Krylov subspace to 30 (i.e., GMRES(30)) and limited the maximum
number of iterations to $500$. For the convergence criterion, we
used different relative tolerance (aka \emph{rtol} or $\varepsilon$),
i.e., $\left\Vert \mathcal{B}-\boldsymbol{\mathcal{A}}\mathcal{U}\right\Vert \le\varepsilon\left\Vert \mathcal{B}\right\Vert $.
In terms of the initial guess of GMRES, we used the initial condition
for the first time step and used the solution from the previous time
step starting from second iterations. To verify the correctness of
our implementations of numerical discretization methods and restarted
GMRES, Table~\ref{tab:Verification-of-convergence} shows the relative
2-norm errors compared to the exact solution in \eqref{eq:3d-manufactured-sol}
using $P_{3}$ and $P_{4}$ finite elements as well as fourth and
sixth-order finite differences. For this verification, we chose the
Gauss-Legendre schemes and $\delta t$ to match the order and resolution
of spatial discretization approximately, and we solved the linear
systems using GMRES right-preconditioned using BRSD with HILUCSI.
It can be seen that the errors converged at the expected orders except
for the sixth-order finite differences at the finest level, which
had reached near machine precision. In the remainder of this section,
we will assess the effect of different combinations of spatial and
temporal discretization schemes and resolutions on the preconditioning
techniques for a range of convergence tolerances, with the understanding
that some of them may not be ideal combinations from the point of
view of discretization errors. To get a reliable count on the number
of iterations, we ran each test case for ten time-steps and averaged
the iteration counts starting from the second time step. 
\begin{table}
\caption{\label{tab:Verification-of-convergence}Verification of relative $2$-norm
errors and convergence rates (in parentheses) using FEM with $\boldsymbol{v}=[10,10,10]^{T}$
and FDM with $\boldsymbol{v}=[1,1,1]^{T}$.}

\centering{}\setlength\tabcolsep{4pt}%
\begin{tabular}{cccccc}
\toprule 
\multicolumn{6}{c}{Finite element methods}\tabularnewline
\midrule
$\begin{array}{c}
\text{discretization}\\
\text{order}
\end{array}$ &  &  & $\begin{array}{c}
\ell=1\\
\delta t=\nicefrac{1}{32}
\end{array}$ & $\begin{array}{c}
\ell=2\\
\delta t=\nicefrac{1}{64}
\end{array}$ & $\begin{array}{c}
\ell=3\\
\delta t=\nicefrac{1}{128}
\end{array}$\tabularnewline
\midrule
$P_{3}$/GL-2 &  &  & 3.26e-05 (-) & 2.16e-06 (4.0) & 1.39e-07 (4.0)\tabularnewline
$P_{4}$/GL-3 &  &  & 1.56e-06 (-) & 4.96e-08 (5.1) & 1.56e-09 (5.0)\tabularnewline
\midrule
\midrule 
\multicolumn{6}{c}{Finite difference methods}\tabularnewline
\midrule 
$\begin{array}{c}
\text{discretization}\\
\text{order}
\end{array}$ &  &  & $\begin{array}{c}
h=\nicefrac{1}{32}\\
\delta t=\nicefrac{1}{32}
\end{array}$ & $\begin{array}{c}
h=\nicefrac{1}{64}\\
\delta t=\nicefrac{1}{64}
\end{array}$ & $\begin{array}{c}
h=\nicefrac{1}{128}\\
\delta t=\nicefrac{1}{128}
\end{array}$\tabularnewline
\midrule 
$p=4$/GL-2 &  &  & 5.47e-07 (-) & 3.42e-08 (4.1) & 2.15e-09 (4.0)\tabularnewline
$p=6$/GL-3 &  &  & 2.02e-09 (-) & 3.87e-11 (5.8) & 9.10e-13 (5.5)\tabularnewline
\bottomrule
\end{tabular}
\end{table}

\subsection{Comparison of near-linear approximate inverses\label{subsec:Comparison-of-near-linear}}

For the mathematically optimal preconditioners, including BRSD, BJF,
and BCSD, an iterative solver would converge in a single iteration
to (near) machine precision, which we have proven theoretically and
verified empirically. For practical purpose, we focus on assessing
near-optimal preconditioners, which require near-linear complexity
of approximate inverses in terms of both factorization and solve times
per KSP iteration. As discussed in Section~\ref{subsec:Single-level-and-multilevel},
ILU(0) and HILUCSI are two techniques that satisfy this requirement.
As noted in Section~\ref{subsec:Single-level-and-multilevel}, another
important class of scalable preconditioners are multigrid methods,
especially those with a robust ILU-based smoother. For completeness,
we have tested our preconditioning strategies by leveraging the BoomerAMG
in hypre 2.21.0 \cite{hypre2021}, which we accessed through PETSc
3.15.0 \cite{balay2019petsc}. BoomerAMG has many choices of coarsening,
interpolation, and smoothing options \cite{hypre2021}. After some
preliminary testing, we chose to use its $\ell1$-Gauss-Seidel and
Euclid smoothers as two examples, where the latter is an extensible
incomplete LU framework \cite{hysom2001scalable}. Although hypre
offers other smoothers based on ILU and approximate inverse (in particular,
Pilut and ParaSails), only Euclid enabled hypre to solve all the systems
in our test. We used Falgout coarsening and classical interpolation
options for both smoothers, since our tests show that different coarsening
and interpolation options made little difference for the advection-diffusion
equation.

Table~\ref{tab:ILU-vs-AMG-FEM} compares the average number of GMRES
iterations as well as the average solve times per time step for the
three-stage GL (GL-3) scheme with $P_{3}$ FEM on the two coarser
meshes. As a baseline, we compare ILU(0), HILUCSI, and AMG as ``global''
preconditioners (i.e., approximately factorizing the $sm\times sm$
coefficient matrix $\boldsymbol{\mathcal{A}}$ directly), and then
compare them to factorizing the diagonal blocks in BRSD, KPS, BGS,
and BJF. Although BJF was not previously used as a preconditioner
in the literature, it shares some similarities with the Strang-type
block-circulant preconditioners \cite{chan2001strang}, so we include
it in this baseline comparison.

In terms of the solve times, we include only the (block) triangular
solves and the sparse matrix-vector multiplication, since they are
the overwhelmingly dominant part of the computations, and other parts
of the preconditioned GMRES have virtually the same cost for different
preconditioning techniques. All of timing was conducted on a single
node of a Linux cluster with dual 2.60 GHz Intel(R) Xeon(R) E5-2690v3
processors with 128 GB RAM. HILUCSI is written in C++ \cite{chen2021hilucsi},
and we compiled it using GCC-4.8 with optimization option `-O3 -ffast-math'.
HILUCSI has three key parameters, namely droptol ($\tau$), condest
($\kappa$), and NNZ factor ($\alpha$), for which we used $10^{-3}$,
$5$, and $5$, respectively, for all the tests. For ILU(0), we used
MATLAB's built-in (pre-compiled) \textsf{ilu} function\footnotemark[3]\footnotetext[3]{Unlike
user-defined M-files, MATALB's built-in ilu function is pre-compiled
C code, and there is virtually no performance penalty when calling
it in MATLAB.} with the ``nofill'' option. We chose this option
because the more sophisticated ILUTP leads to superlinear complexity
\cite{ghai2019comparison}. To make the comparison more fair, we applied
preprocessing steps, including equilibration \cite{duff2001algorithms}
and fill-reduction ordering \cite{chan1980linear}, before ILU(0),
to maximize its effectiveness. As can be seen from the table, HILUCSI
and ILU(0)+preprocessing have comparable performance for the coarser
meshes, but HILUCSI outperformed ILU(0) for the finer meshes by up
to a factor of three and five in terms of solve times and numbers
of iterations, respectively, thanks to HILUCSI's multilevel structure,
its attention to the stability of triangular factors, and its scalability-oriented
droppings. Regarding BoomerAMG in hypre, we make the following observations.
For global and BRSD, AMG-GS (i.e., BoomerAMG with $\ell1$-Gauss-Seidel
smoother) failed to enable GMRES to converge after 500 iterations,
so did almost all other combinations we tested. AMG-Euclid (i.e.,
BoomerAMG with Euclid as smoother) managed to converge, with a noticeable
improvement in terms of the number of iterations compared to ILU(0).
However, its runtimes were about 15--20 times slower than ILU(0),
and its numbers of iterations and runtimes were 4--6 and 30--50
times worse than HILUCSI.\footnotemark[4]\footnotetext[4]{BoomerAMG
also required longer setup time than the factorization times of ILU(0)
and HILUCSI.} The poor-performance of AMG for these two cases were
because $P_{3}$ elements in 3D lead to matrices that are much denser
than low-order FEM. The increased density introduces significant challenges
to the coarsening, interpolation, and smoothing strategies in AMG.\footnotemark[5]\footnotetext[5]{Effectiveness
for high-order FEM may be improved by the $p$-multigrid or low-order
multigrid methods; see e.g. \cite{sundar2015comparison}. Hypre has
some support for some edge-based high-order elements \cite{hypre2021},
but it has no support for high-order Lagrange elements. Developing
customized multigrid methods is obviously beyond the scope of this
work.} Furthermore, the $3m\times3m$ and $2m\times2m$ blocks in
GL-3 and in BRSD further increased the number of nonzeros per row
and posed additional challenges. Fortunately, by taking advantage
of Kronecker-product structures, BoomerAMG significantly improved
its robustness, numbers of iterations, and runtimes. Although AMG-Euclid
still had the worst performance, it was only a factor of eight slower
than HILUCSI, which can be overcome more easily by utilizing better
algorithms (such as $p$-multigrid \cite{sundar2015comparison} or
hybrid geometric+algebraic multigrid \cite{lu2014hybrid}) or by leveraging
parallel computations. Hence, there was a significant advantage to
leverage block preconditioners for AMG, rather than applying AMG on
the global coefficient matrix. Regarding BJF, for HILUCSI and ILU(0),
it preconditioners required a similar number of iterations as the
global preconditioner, with some advantages in terms of runtimes.
More importantly, BJF outperformed KPS and BGS by more than a factor
of two on average in terms of numbers of iterations when HILUCSI was
used. Unfortunately, we could not test hypre with BJF since it does
not yet support complex arithmetic \cite{hypre2021}. Between BGS
and KPS, they have better performance for smaller and larger $h^{-2}\delta t$,
respectively, confirming our asymptotic analysis in Section~\ref{subsec:State-of-the-Art-Preconditioners}.
We observed similar behavior for FDM. 

\begin{table}
\caption{Comparison of average numbers of GMRES iterations and solve times
per time step using HILUCSI, ILU(0) with preprocessing, and BoomerAMG
with $\ell1$-Gauss-Seidel and Euclid smoothers in hypre as underlying
preconditioners, for GL-3 with $P_{3}$ FEM for the AD equation with
$\boldsymbol{v}=[10,10,10]^{T}$ and $\text{rtol}=10^{-8}$. Times
are in seconds. `$-$' indicates non-convergence after 500 GMRES iterations.
Leaders within each group are in boldface.\label{tab:ILU-vs-AMG-FEM}}

\centering{}\setlength\tabcolsep{2pt}%
\begin{tabular}{ccccccccccccccccc}
\toprule 
\multirow{3}{*}{method} & \multirow{3}{*}{ILU} &  & \multicolumn{4}{c}{$\delta t=\nicefrac{1}{4}$} &  & \multicolumn{4}{c}{$\delta t=\nicefrac{1}{16}$} &  & \multicolumn{4}{c}{$\delta t=\nicefrac{1}{64}$}\tabularnewline
\cmidrule{4-7} \cmidrule{5-7} \cmidrule{6-7} \cmidrule{7-7} \cmidrule{9-12} \cmidrule{10-12} \cmidrule{11-12} \cmidrule{12-12} \cmidrule{14-17} \cmidrule{15-17} \cmidrule{16-17} \cmidrule{17-17} 
 &  &  & \multicolumn{2}{c}{$\ell=1$} & \multicolumn{2}{c}{$\ell=2$} &  & \multicolumn{2}{c}{$\ell=1$} & \multicolumn{2}{c}{$\ell=2$} &  & \multicolumn{2}{c}{$\ell=1$} & \multicolumn{2}{c}{$\ell=2$}\tabularnewline
\cmidrule{4-7} \cmidrule{5-7} \cmidrule{6-7} \cmidrule{7-7} \cmidrule{9-12} \cmidrule{10-12} \cmidrule{11-12} \cmidrule{12-12} \cmidrule{14-17} \cmidrule{15-17} \cmidrule{16-17} \cmidrule{17-17} 
 &  &  & iter. & time & iter. & time &  & iter. & time & iter. & time &  & iter. & time & iter. & time\tabularnewline
\midrule
\multirow{4}{*}{global} & HILUCSI &  & \textbf{5.4} & \textbf{0.41} & \textbf{9.2} & \textbf{8.13} &  & \textbf{4.9} & \textbf{0.38} & \textbf{8.3} & \textbf{7.33} &  & \textbf{3.2} & \textbf{0.22} & \textbf{5.8} & \textbf{4.49}\tabularnewline
 & ILU(0)+pre &  & 27.6 & 0.51 & 66.8 & 11.1 &  & 24.8 & 0.4 & 57.7 & 9.29 &  & 16.1 & 0.34 & 35.2 & 5.79\tabularnewline
 & AMG-GS &  & $-$ & $-$ & $-$ & $-$ &  & $-$ & $-$ & $-$ & $-$ &  & $-$ & $-$ & $-$ & $-$\tabularnewline
 & AMG-Euclid &  & 20.3 & 9.0 & 47.0 & 281.5 &  & 17.3 & 7.27 & 38 & 229 &  & 11.1 & 4.70 & 21.8 & 135\tabularnewline
\midrule 
\multirow{4}{*}{BRSD} & HILUCSI &  & \textbf{5.2} & \textbf{0.22} & \textbf{9.1} & \textbf{3.27} &  & \textbf{4.9} & \textbf{0.19} & \textbf{8.0} & \textbf{3.07} &  & \textbf{3.1} & \textbf{0.11} & \textbf{5.0} & \textbf{1.86}\tabularnewline
 & ILU(0)+pre &  & 29.3 & 0.4 & 81.6 & 9.61 &  & 25.1 & 0.34 & 64.7 & 7.95 &  & 15.8 & 0.22 & 34.1 & 4.17\tabularnewline
 & AMG-GS &  & $-$ & $-$ & $-$ & $-$ &  & $-$ & $-$ & $-$ & $-$ &  & $-$ & $-$ & $-$ & $-$\tabularnewline
 & AMG-Euclid &  & 23.7 & 6.0 & 53.3 & 129 &  & 20.6 & 4.35 & 52.8 & 128 &  & 12.2 & 2.69 & 31.4 & 67.0\tabularnewline
\midrule 
\multirow{4}{*}{KPS} & HILUCSI &  & \textbf{12.8} & \textbf{0.2} & \textbf{14.6} & \textbf{3.23} &  & \textbf{12.3} & 0.3 & \textbf{13.1} & \textbf{3.25} &  & \textbf{10.1} & 0.22 & \textbf{10.4} & \textbf{2.4}\tabularnewline
 & ILU(0)+pre &  & 51.8 & 0.4 & 125 & 10.1 &  & 33.9 & \textbf{0.28} & 74.8 & 6.73 &  & 18.3 & \textbf{0.17} & 35.8 & 3.54\tabularnewline
 & AMG-GS &  & 15.8 & 1.58 & 17.9 & 19.9 &  & 14.0 & 1.48 & 15.7 & 17.0 &  & 10.3 & 0.97 & 11.1 & 12.7\tabularnewline
 & AMG-Euclid &  & 15.8 & 2.23 & 17.7 & 26.7 &  & 14.1 & 2.07 & 15.4 & 23.5 &  & 11.0 & 1.56 & 12.0 & 18.7\tabularnewline
\midrule 
\multirow{4}{*}{BGS} & HILUCSI &  & \textbf{9.8} & \textbf{0.25} & \textbf{13.1} & \textbf{3.32} &  & \textbf{9.4} & \textbf{0.23} & \textbf{11.8} & \textbf{3.01} &  & \textbf{7.2} & \textbf{0.17} & \textbf{8.0} & \textbf{1.92}\tabularnewline
 & ILU(0)+pre &  & 83.4 & 0.74 & 349 & 32.7 &  & 52.7 & 0.49 & 166 & 15.2 &  & 21.3 & 0.22 & 52.0 & 4.79\tabularnewline
 & AMG-GS &  & 15.4 & 1.61 & 18.8 & 19.6 &  & 13.3 & 1.35 & 15.9 & 17.9 &  & 8.8 & 0.88 & 10.6 & 10.8\tabularnewline
 & AMG-Euclid &  & 15.1 & 2.07 & 18.4 & 27.9 &  & 13.6 & 2.02 & 15.4 & 25.4 &  & 9.8 & 1.40 & 11.2 & 16.6\tabularnewline
\midrule
\multirow{2}{*}{BJF} & HILUCSI &  & \textbf{4.7} & \textbf{0.29} & \textbf{7.8} & \textbf{2.68} &  & \textbf{4.1} & \textbf{0.21} & \textbf{5.3} & \textbf{2.45} &  & \textbf{3.2} & \textbf{0.14} & \textbf{4.9} & \textbf{1.68}\tabularnewline
 & ILU(0)+pre &  & 27.6 & 0.45 & 66.8 & 8.98 &  & 24.8 & 0.4 & 57.7 & 7.85 &  & 16.1 & 0.29 & 35.2 & 5.07\tabularnewline
\bottomrule
\end{tabular}
\end{table}

Based on these previous comparisons, we will use HILUCSI in our later
comparative studies. Note that our previous timing results in Table~\ref{tab:ILU-vs-AMG-FEM}
did not consider factorization cost. It is important to verify the
near-linear complexity of its factorization and solve times. To this
end, we solved the 3D AD equation under mesh refinement using FEM
and FDM with the two-, three-, and four-stage GL schemes with $\delta t=1/4$
but different orders of spatial accuracy. For simplicity, we only
considered the PNKP in \eqref{eq:PNKP} to precondition GMRES. Figure~\ref{fig:SAIK-time}
reports the factorization times and the averaged solve time per GMRES
iteration at the first time step. These scalability tests were conducted
on a Linux cluster with 2.5 GHz Intel(R) Xeon(R) E5-2680v3 processors
and 64 GB of RAM. It is clear that HILUCSI scaled (near) linearly
in both factorization and solve times, confirming the analysis in
\cite{chen2021hilucsi}. Note that for $P_{2}$ FEM, the super-linear
time complexity was due to the cache performance of the dense solve
at the coarsest level in HILUCSI. It is also worth noting that the
factorization cost is about an order of magnitude of that of solve
times. The factorization is performed only at the first time step
or when $\delta t$ changes, so its cost can be amortized over several
time steps. In our later analysis, we will focus on comparing the
number of GMRES iterations. As a rule of thumb, the cost per GMRES
iteration in BCSD, BJF, and BRSD is approximately twice as much as
PNKP, and the cost of the others is comparable to PNKP.

\begin{figure}
\subfloat[\label{fig:SAIK-time-fem}Runtimes for 3D FEM.]{\includegraphics[width=0.48\columnwidth]{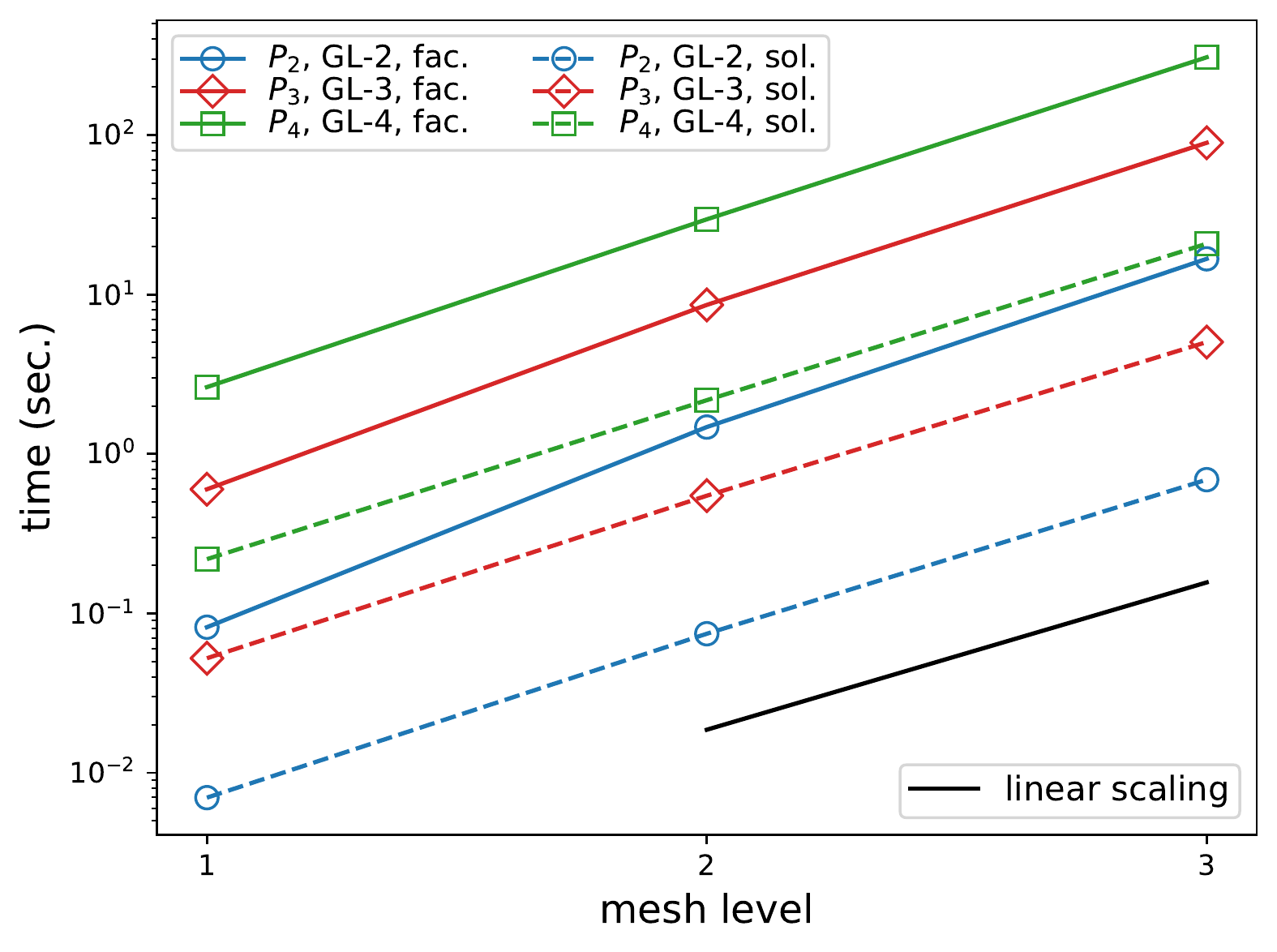}

}\hfill\subfloat[\label{fig:SAIK-time-fdm}Runtimes for 3D FDM.]{\includegraphics[width=0.48\columnwidth]{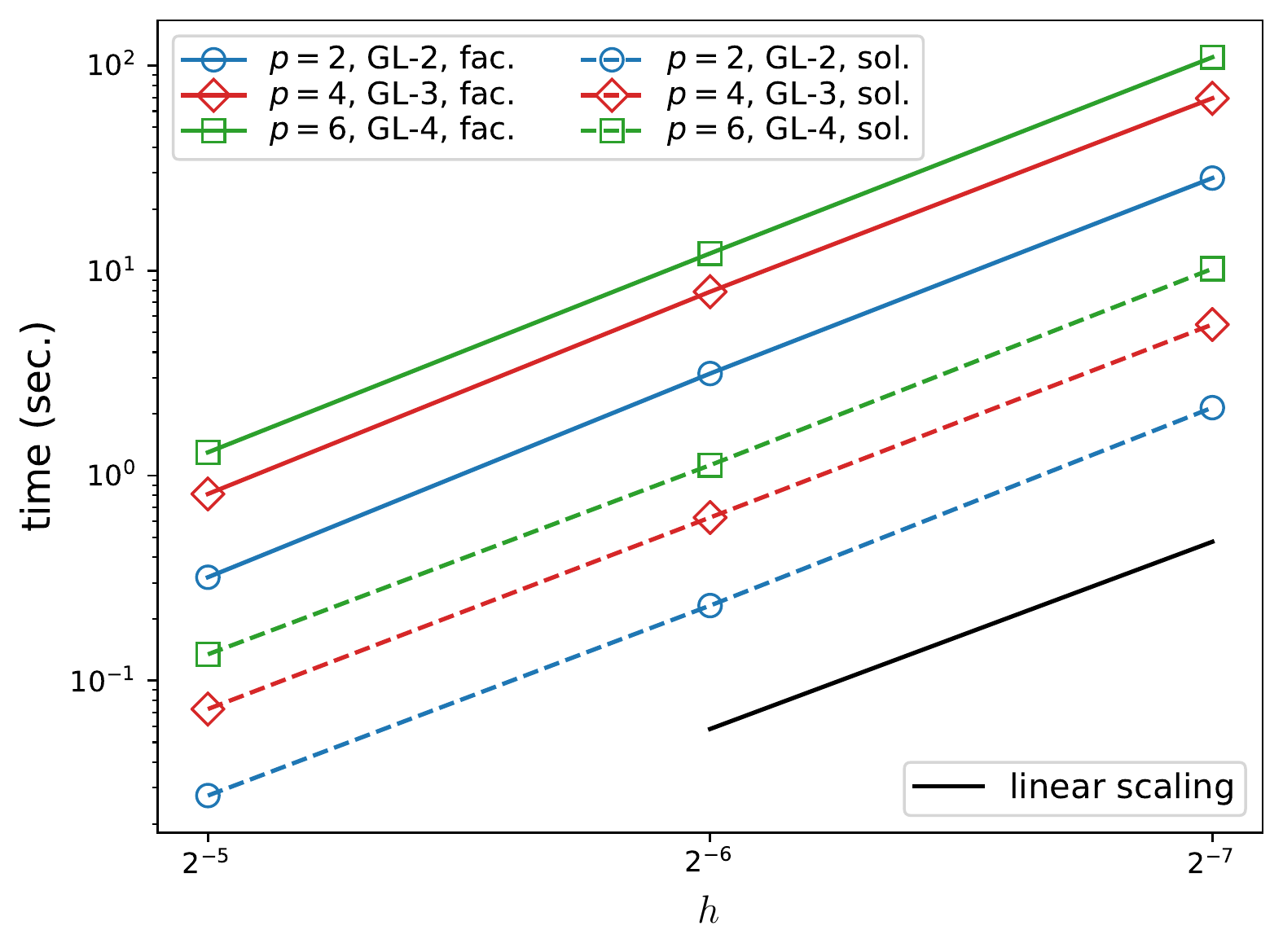}

}

\caption{\label{fig:SAIK-time}Factorization and solve times of PNKP for 3D
FEM and FDM. Solid and dashed lines are for the factorization times
of HILUCSI and averaged solve times per GMRES iteration, respectively.}
\end{figure}

\subsection{Comparison of mathematically optimal preconditioners\label{subsec:Results-optimal-preconditioners}}

We compare the mathematically optimal preconditioners, namely BCSD,
BRSD, and BJF. As shown in Tables~\ref{tab:ILU-vs-AMG-FEM}, BJF
significantly outperformed KPS and BGS in terms of the number of GMRES
iterations. Hence, it suffices to compare BCSD and BRSD with BJF.
In addition, since PNKP is optimal as $h^{-2}\delta t\rightarrow\infty$,
we also include it as a point of reference due to its simplicity.

As in Section~\ref{subsec:Comparison-of-near-linear}, we discretized
the 3D AD equation \eqref{eq:advection-diffusion} using both FEM
and FDM with different velocities. For FEM, we used the two- and three-stage
GL schemes (i.e., GL-2 and GL-3, respectively), which converge at
the rates of $\mathcal{O}\left(\delta t^{4}\right)$ and $\mathcal{O}\left(\delta t^{6}\right)$
in time, correspondingly. For GL-2 and GL-3, we used $P_{2}$ and
$P_{3}$ elements for spatial discretization, correspondingly, where
$P_{k}$ FEM converges at $\mathcal{O}\left(h^{k+1}\right)$ in $L^{2}$
norm. We used $\text{rtol}=10^{-6}$ and $10^{-10}$ as the convergence
tolerance in GMRES for GL-2 and GL-3, respectively. Table~\ref{tab:fem-external-comp}
shows the average numbers of GMRES iterations. It is worth noting
that BCSD and BJF had comparable performance, and they both significantly
outperformed PNKP on coarser meshes, although PNKP is quite competitive
with larger time steps on the finest mesh. BRSD performed slightly
worse than BCSD and BJF because its larger $2m\times2m$ diagonal
blocks tend to introduce more droppings.

\begin{table}
\caption{Comparison of the average numbers of GMRES iterations preconditioned
by BCSD, BRSD, BRSD, and PNKP for GL-2 and GL-3 with $P_{2}$ and
$P_{3}$ FEM for the AD equation with $\boldsymbol{v}=[10,10,10]^{T}$
and $\text{rtol}=10^{-6}$ and $10^{-8}$, respectively. Numbers in
parentheses indicate numbers of stages. The leaders are in boldface.\label{tab:fem-external-comp}}

\centering{}\setlength\tabcolsep{5pt}%
\begin{tabular}{ccccccccccccc}
\toprule 
\multirow{2}{*}{method} &  & \multicolumn{3}{c}{$\delta t=\nicefrac{1}{4}$} &  & \multicolumn{3}{c}{$\delta t=\nicefrac{1}{16}$} &  & \multicolumn{3}{c}{$\delta t=\nicefrac{1}{64}$}\tabularnewline
\cmidrule{3-5} \cmidrule{4-5} \cmidrule{5-5} \cmidrule{7-9} \cmidrule{8-9} \cmidrule{9-9} \cmidrule{11-13} \cmidrule{12-13} \cmidrule{13-13} 
 &  & $\ell=1$ & $2$ & $3$ &  & $\ell=1$ & $2$ & $3$ &  & $\ell=1$ & $2$ & $3$\tabularnewline
\midrule
BCSD(2) &  & \textbf{4.0} & 6.0 & 10.3 &  & 4.0 & \textbf{5.3} & 9.2 &  & 3.0 & 4.0 & \textbf{6.0}\tabularnewline
BJF(2) &  & 4.2 & \textbf{5.9} & \textbf{10.1} &  & 4.0 & \textbf{5.3} & \textbf{9.1} &  & 3.0 & 4.0 & 6.8\tabularnewline
BRSD(2) &  & \textbf{4.0} & 6.4 & 11.9 &  & \textbf{3.3} & 6.0 & 10.8 &  & \textbf{2.4} & \textbf{3.9} & 7.1\tabularnewline
PNKP(2) &  & 6.2 & 6.4 & 11.6 &  & 9.0 & 8.7 & 10.4 &  & 12.0 & 12.2 & 9.4\tabularnewline
\midrule 
BCSD(3) &  & 4.8 & 7.9 & 16.3 &  & \textbf{4.1} & \textbf{6.7} & \textbf{12.8} &  & \textbf{3} & \textbf{4.6} & \textbf{7.3}\tabularnewline
BJF(3) &  & \textbf{4.7} & \textbf{7.8} & \textbf{14.7} &  & 4.1 & 7.1 & 13.1 &  & 3.2 & 4.9 & 8.8\tabularnewline
BRSD(3) &  & 5.2 & 9.1 & 17.7 &  & 4.9 & 8.0 & 15.0 &  & 3.1 & 5.0 & 9.8\tabularnewline
PNKP(3) &  & 9.3 & 8.9 & 15.0 &  & 14.6 & 13.8 & 14.6 &  & 24.0 & 23.1 & 18.3\tabularnewline
\bottomrule
\end{tabular}
\end{table}

For FDM, we used second- and sixth-order finite differences (i.e.,
$p=2$ and $p=6$, respectively) in space, and used the GL-2 and GL-4
in time, correspondingly. Table~\ref{tab:fdm-external-comp} shows
the average numbers of GMRES iterations. Similar to the FEM results,
BCSD and BJF are mostly comparable, but BCSD outperformed BJF on average.
BRSD performed slightly worse than BCSD and BJF. PNKP performed the
worse overall, by up to a factor of eight in one case. Hence, the
more sophisticated manipulations in BCSD, BJF, and BCSD are beneficial
in general, compared to the simple PNKP.

\begin{table}
\caption{Comparison of the average numbers of GMRES iterations preconditioned
by BCSD, BRSD, BRSD, and PNKP for GL-2 and GL-4 with FDM for the AD
equation with $\boldsymbol{v}=[1,1,1]^{T}$ and $\text{rtol}=10^{-6}$
and $10^{-10}$, respectively. Numbers in parentheses indicate numbers
of stages. The leaders in each group are in boldface.\label{tab:fdm-external-comp}}

\centering{}\setlength\tabcolsep{4pt}%
\begin{tabular}{ccccccccccccc}
\toprule 
\multirow{2}{*}{method} &  & \multicolumn{3}{c}{$\delta t=\nicefrac{1}{4}$} &  & \multicolumn{3}{c}{$\delta t=\nicefrac{1}{16}$} &  & \multicolumn{3}{c}{$\delta t=\nicefrac{1}{64}$}\tabularnewline
\cmidrule{3-5} \cmidrule{4-5} \cmidrule{5-5} \cmidrule{7-9} \cmidrule{8-9} \cmidrule{9-9} \cmidrule{11-13} \cmidrule{12-13} \cmidrule{13-13} 
 &  & $h=\nicefrac{1}{32}$ & $\nicefrac{1}{64}$ & $\nicefrac{1}{128}$ &  & $h=\nicefrac{1}{32}$ & $\nicefrac{1}{64}$ & $\nicefrac{1}{128}$ &  & $h=\nicefrac{1}{32}$ & $\nicefrac{1}{64}$ & $\nicefrac{1}{128}$\tabularnewline
\midrule
BCSD(2) &  & 12.0 & 23.9 & 55.1 &  & \textbf{9.2} & \textbf{17.1} & \textbf{33.9} &  & \textbf{4.8} & \textbf{8.4} & \textbf{15.9}\tabularnewline
BJF(2) &  & 12.1 & 23.2 & 55.1 &  & 9.7 & 18.1 & 37.2 &  & 5.6 & 9.7 & 18.6\tabularnewline
BRSD(2) &  & \textbf{11.8} & \textbf{22.2} & \textbf{49.8} &  & 9.4 & 17.4 & 34.0 &  & 5.6 & 9.7 & 18.2\tabularnewline
PNKP(2) &  & 12.0 & 23.2 & 55.1 &  & 9.9 & 18.7 & 37.3 &  & 9.0 & 10.4 & 18.9\tabularnewline
\midrule 
BCSD(4) &  & 13.6 & 27.2 & 62.8 &  & \textbf{9.1} & \textbf{17.3} & \textbf{35.9} &  & \textbf{5.9} & \textbf{9.3} & \textbf{18.4}\tabularnewline
BJF(4) &  & 13.6 & 25.8 & 59.9 &  & 10.7 & 20.3 & 43.9 &  & 6.7 & 11.4 & 22.6\tabularnewline
BRSD(4) &  & \textbf{13.0} & \textbf{25.1} & \textbf{58.1} &  & 10.3 & 19.4 & 40.3 &  & 6.6 & 11.2 & 21.7\tabularnewline
PNKP(4) &  & 15.8 & 26.8 & 61.3 &  & 24.4 & 23.6 & 47.0 &  & 49.4 & 28.7 & 26.9\tabularnewline
\bottomrule
\end{tabular}
\end{table}

\subsection{Effect of ordering in CSD and RSD\label{subsec:Effect-of-ordering}}

In this work, we suggest to sort the diagonal blocks in BCSD (and
SABRSD) based on the ascending order of $\left|d_{i}\right|$, due
to Proposition~\ref{prop:error-bnd} and an assumption of insensitivity
of $\left\Vert \delta\boldsymbol{D}_{j}\boldsymbol{x}_{j}\right\Vert $
and $\left\Vert \boldsymbol{R}_{ij}\right\Vert $ to ordering. This
analysis and assumption require some numerical verification to demonstrate
the potential benefits. Table~\ref{tab:fdm-ordering} compares the
average numbers of GMRES iterations for the GL-5 and GL-6 with sixth-order
finite difference method for the 3D AD equation. We ordered the diagonal
entries in CSD in ascending and descending orders of $\left|d_{i}\right|$
for BCSD and BCSD-R, respectively; similarly for SABRSD and SABRSD-R.
It can be seen that BCSD and SABRSD outperformed BCSD-R and SABRSD-R,
respectively. The benefit for SABRSD is particularly significant,
with a margin of about $30\%$ on average. Furthermore, SABRSD-R for
GL-5 did not reach the $10^{-12}$ relative tolerance and stagnated
at about $10^{-11}$. We note that for GL-3 and GL-4, the ordering
also led to a difference with similar margins for SABRSD, but its
effect on BCSD was not as pronounced.

\begin{table}
\caption{Assessment of the impact of ordering on BCSD and SABRSD on the average
numbers of GMRES iterations for the GL-5 and GL-6 with sixth-order
FDM for the AD equation with $\boldsymbol{v}=[1,1,1]^{T}$ and $\text{rtol}=10^{-12}$.
The numbers in the parentheses indicate numbers of stages. `$-$'
indicates stagnation. The leaders in each group are in boldface.\label{tab:fdm-ordering}}

\centering{}\setlength\tabcolsep{3pt}%
\begin{tabular}{ccccccccccccc}
\toprule 
\multirow{2}{*}{method} &  & \multicolumn{3}{c}{$\delta t=\nicefrac{1}{4}$} &  & \multicolumn{3}{c}{$\delta t=\nicefrac{1}{16}$} &  & \multicolumn{3}{c}{$\delta t=\nicefrac{1}{64}$}\tabularnewline
\cmidrule{3-5} \cmidrule{4-5} \cmidrule{5-5} \cmidrule{7-9} \cmidrule{8-9} \cmidrule{9-9} \cmidrule{11-13} \cmidrule{12-13} \cmidrule{13-13} 
 &  & $h=\nicefrac{1}{32}$ & $\nicefrac{1}{64}$ & $\nicefrac{1}{128}$ &  & $h=\nicefrac{1}{32}$ & $\nicefrac{1}{64}$ & $\nicefrac{1}{128}$ &  & $h=\nicefrac{1}{32}$ & $\nicefrac{1}{64}$ & $\nicefrac{1}{128}$\tabularnewline
\midrule
BCSD(5) &  & \textbf{15.8} & 32.3 & \textbf{77.2} &  & \textbf{11.0} & \textbf{20.6} & \textbf{44.8} &  & 7.0 & \textbf{10.9} & \textbf{22.2}\tabularnewline
BCSD-R(5) &  & 15.8 & \textbf{30.9} & 82.9 &  & 11.2 & 22.3 & 49.4 &  & \textbf{6.9} & 11.7 & 24.0\tabularnewline
\midrule
SABRSD(5) &  & \textbf{23.7} & \textbf{56.9} & \textbf{171.6} &  & \textbf{18.9} & \textbf{26.0} & \textbf{63.9} &  & \textbf{17.1} & \textbf{17.4} & \textbf{23.4}\tabularnewline
SABRSD-R(5) &  & 33.8 & 75.7 & $-$ &  & 28.1 & 36.4 & 79.8 &  & 25.1 & 25.4 & 32.2\tabularnewline
\midrule
BCSD(6) &  & \textbf{15.7} & \textbf{32.8} & \textbf{80.0} &  & \textbf{11.0} & \textbf{20.6} & \textbf{46.9} &  & 6.7 & \textbf{10.9} & \textbf{21.6}\tabularnewline
BCSD-R(6) &  & 15.8 & 33.0 & 84.8 &  & \textbf{11.0} & 22.0 & 50.8 &  & \textbf{6.1} & 11.3 & 23.0\tabularnewline
\midrule 
SABRSD(6) &  & \textbf{25.7} & \textbf{58.0} & \textbf{168.8} &  & \textbf{22.7} & \textbf{26.1} & \textbf{61.8} &  & \textbf{21.4} & \textbf{20.4} & \textbf{23.4}\tabularnewline
SABRSD-R(6) &  & 35.9 & 76.4 & 209.4 &  & 32.8 & 36.9 & 77.8 &  & 29.4 & 29.3 & 33.2\tabularnewline
\bottomrule
\end{tabular}
\end{table}

As a side product, Table~\ref{tab:fdm-ordering} also shows that
the number of GMRES iterations of BCSD is about 60\% of that of SABRSD
on average. Since BCSD (and similar BRSD) is approximately twice as
expensive as SABRSD per GMRES iteration, SABRSD is competitive as
a limited-memory, near-optimal preconditioner.

\subsection{Effect of optimization in SABRSD\label{subsec:Effect-of-singly}}

We now assess the effectiveness of the optimization techniques of
SABRSD in Section~\ref{subsec:BARSD}. To this end, we compare SABRSD
with TBRSD, which simply truncates the lower-triangular part of the
sorted and permuted $\boldsymbol{R}$ matrix in RSD. Table~\ref{tab:fem-optimization}
shows the average numbers of GMRES iterations for GL-3 and GL-4 with
$P_{3}$ and $P_{4}$ finite element methods, respectively, for the
3D AD equation. It can be seen that SABRSD always outperformed TBRSD,
and it reduced the number of GMRES iterations by about 20\% on average.
As a point of reference, we also compared SABRSD with SOBT, which
applies the optimization strategy on the transpose of the Butcher
array. Our results show that SOBT improved over BGS up to about 10\%
on finer meshes for GL-3, but it did not improve for GL-4. These results
indicate that our asymptotic analysis in Section~\ref{subsec:BARSD}
works well only when $h^{-2}\delta t$ is sufficiently large, and
the optimization works the best when combined with the ordering described
in Section~\ref{subsec:Ordering-and-permuting}. As a cross-reference
with existing methods, Table~\ref{tab:fem-optimization} also included
the number of GMRES iterations of KPS \cite{chen2014splitting}. It
can be seen that SABRSD outperformed BGS and KPS by 17\% and 32\%
on average, respectively, although it slightly under-performed KPS
in two cases with the largest $\delta t$ on the finest mesh.

\begin{table}
\caption{Comparison of the effect of singly-diagonal optimization for the average
numbers of GMRES iterations for GL-3 and GL-4 with $P_{3}$ and $P_{4}$
FEM for AD equation with $\boldsymbol{v}=[10,10,10]^{T}$ and $\text{rtol}=10^{-2s-2}$,
respectively. The numbers in the parentheses indicate numbers of stages.
The leaders in each group are in boldface.\label{tab:fem-optimization}}

\centering{}\setlength\tabcolsep{5pt}%
\begin{tabular}{ccccccccccccc}
\toprule 
\multirow{2}{*}{method} &  & \multicolumn{3}{c}{$\delta t=\nicefrac{1}{4}$} &  & \multicolumn{3}{c}{$\delta t=\nicefrac{1}{16}$} &  & \multicolumn{3}{c}{$\delta t=\nicefrac{1}{64}$}\tabularnewline
\cmidrule{3-5} \cmidrule{4-5} \cmidrule{5-5} \cmidrule{7-9} \cmidrule{8-9} \cmidrule{9-9} \cmidrule{11-13} \cmidrule{12-13} \cmidrule{13-13} 
 &  & $\ell=1$ & $2$ & $3$ &  & $\ell=1$ & $2$ & $3$ &  & $\ell=1$ & $2$ & $3$\tabularnewline
\midrule
SABRSD(3) &  & \textbf{8.4} & \textbf{11.4} & 22.8 &  & \textbf{8.3} & \textbf{8.8} & \textbf{16.4} &  & \textbf{6.9} & \textbf{6.9} & \textbf{7.8}\tabularnewline
TBRSD(3) &  & 9.8 & 13.4 & 24.8 &  & 10.7 & 12.6 & 19.8 &  & 8.0 & 8.7 & 11.3\tabularnewline
SOBT(3) &  & 11.0 & 13.6 & 25.0 &  & 10.7 & 11.3 & 18.4 &  & 8.6 & 8.8 & 9.8\tabularnewline
BGS(3) &  & 9.8 & 13.1 & 25.4 &  & 9.4 & 11.8 & 19.6 &  & 7.2 & 8.0 & 11.2\tabularnewline
KPS(3) &  & 12.8 & 14.6 & \textbf{22.7} &  & 12.3 & 13.1 & 17.7 &  & 10.1 & 10.4 & 12.2\tabularnewline
\midrule
SABRSD(4) &  & \textbf{13.2} & \textbf{15.6} & 32.9 &  & \textbf{13.3} & \textbf{13.3} & \textbf{20.6} &  & \textbf{11.3} & \textbf{11.7} & \textbf{11.7}\tabularnewline
TBRSD(4) &  & 16.0 & 20.6 & 41.3 &  & 16.7 & 19.2 & 28.3 &  & 13.3 & 13.9 & 17.0\tabularnewline
SOBT(4) &  & 19.6 & 22.6 & 45.0 &  & 19.9 & 20.7 & 27.3 &  & 16.0 & 16.4 & 17.1\tabularnewline
BGS(4) &  & 15.4 & 19.1 & 43.4 &  & 15.1 & 17.6 & 28.2 &  & 11.6 & 12.6 & 16.9\tabularnewline
KPS(4) &  & 18.1 & 20.0 & \textbf{30.4} &  & 17.3 & 17.8 & 22.6 &  & 14.7 & 15.4 & 17.3\tabularnewline
\bottomrule
\end{tabular}
\end{table}

\section{\label{sec:conclusions}Conclusion}

We introduced three mathematically optimal preconditioners (namely
\emph{BRCD}, \emph{BRSD}, and \emph{BJF}) and a limited-memory near-optimal
preconditioner\emph{ SABRSD}, for fully implicit RK schemes in solving
parabolic PDEs. The optimal preconditioners have high memory requirements
and factorization costs. In comparison, SABRSD has a comparable memory
requirement and factorization cost as a singly diagonally implicit
RK scheme (SDIRK). We optimized SABRSD based on the mathematical theory
of $\epsilon$-accurate preconditioners. We approximately factorized
the diagonal blocks in these preconditioners using near-linear complexity
ILU factorizations, including HILUCSI and ILU(0). With HILUCSI, we
then showed that BCSD and BRSD significantly outperformed the prior
state-of-the-art block preconditioners in terms of the number of GMRES
iterations. SABRSD outperformed the prior state-of-the-art block preconditioners
(namely, BGS and KPS) by 17--32\% on average. SABRSD also compared
reasonably well with BCSD in terms of the computational cost while
requiring much less memory. 

In this work, we focused on serial computations using HILUCSI in the
majority of the numerical experimentation. For larger-scale problems
on massively parallel computers, however, we expect leveraging multigrid
methods (especially geometric multigrid, $p$-multigrid, or a hybrid
of them with AMG) with a robust, parallel ILU-based smoother can enjoy
similar benefits that we have demonstrated in serial. In addition,
the potential benefits of BCSD and BJF may be realized for multigrid
solvers that support complex arithmetic.

This work primarily focused on the Gauss-Legendre schemes for their
optimal accuracy. However, our techniques can be generalized to other
A-stable FIRK (such as Radau IIA and Lobatto IIIC) and DIRK schemes
to develop low-memory, near-optimal block preconditioners. While SABRSD
minimizes memory usage compared to BCSD and BRSD, it is worth developing
flexible variants with different diagonal entries, which require generalizing
the error analysis and the optimization strategies developed in this
work. Finally, this work focused on linear PDEs with time-invariant
coefficients. For nonlinear PDEs with time-dependent coefficients,
the discretization may lose the exact Kronecker-product structure.
Another future research direction is to extend the work to support
nonlinear PDEs and compare different linearization strategies in constructing
Kronecker-product approximations.

\section*{Acknowledgments}

The authors thank the support of the CANGA project under the Scientific
Discovery through Advanced Computing (SciDAC) program in the US Department
of Energy\textquoteright s Office of Science, Office of Advanced Scientific
Computing Research through subcontract \#462974 with Los Alamos National
Laboratory. Computational results were obtained using the Seawulf
cluster at the Institute for Advanced Computational Science of Stony
Brook University, which was partially funded by the Empire State Development
grant NYS \#28451. We thank the anonymous reviewers for their helpful
comments, which have significantly improved the presentation of the
work.

\bibliographystyle{siamplain}
\bibliography{reference_rk}

\appendix

\section{Sorting and Permuting RSD\label{sec:sort-permute-RSD}}

Real Schur decomposition is not unique, and BRSD and SABRSD use different
orderings and permutations of the diagonal blocks.  To sort RSD,
we use the \textsf{SRSchur} function in \cite{brandts2002matlab},
except that we need to replace its \textsf{select} sub-function to
sort by the real part of the diagonal blocks. In terms of the orientation,
let $\boldsymbol{R}_{i}$ denote a 2-by-2 diagonal block in $\boldsymbol{R}_{*}$
from the sorted RSD $\boldsymbol{A}=\boldsymbol{Q}_{*}\boldsymbol{R}_{*}\boldsymbol{Q}_{*}^{T}$,
and let $\boldsymbol{S}_{i}$ be its corresponding $2\times2$ permutation
matrix. Let $\boldsymbol{P}_{i}$ denote the $s\times s$ permutation
matrix $\boldsymbol{P}_{i}=\begin{bmatrix}\boldsymbol{I}_{s_{1}}\\
 & \boldsymbol{S}_{i}\\
 &  & \boldsymbol{I}_{s_{2}}
\end{bmatrix}$, where $s_{1}$ and $s_{2}$ are the numbers of rows above and below
$\boldsymbol{R}_{i}$, respectively. The sorted and permuted RSD
is then
\begin{equation}
\boldsymbol{A}=\boldsymbol{Q}\boldsymbol{R}\boldsymbol{Q}{}^{T},\text{where }\boldsymbol{R}=\ensuremath{\boldsymbol{P}_{\left\lceil \nicefrac{s}{2}\right\rceil }}\ensuremath{\cdots}\ensuremath{\boldsymbol{P}_{1}}\boldsymbol{R}_{*}\ensuremath{\boldsymbol{P}_{1}}^{T}\ensuremath{\cdots}\ensuremath{\boldsymbol{P}_{\left\lceil \nicefrac{s}{2}\right\rceil }^{T}}\text{ and }\text{\ensuremath{\boldsymbol{Q}}=\ensuremath{\boldsymbol{Q}_{*}}\ensuremath{\ensuremath{\boldsymbol{P}_{1}^{T}}}\ensuremath{\cdots}\ensuremath{\boldsymbol{P}_{\left\lceil \nicefrac{s}{2}\right\rceil }^{T}}}.\label{eq:SUD-RSD}
\end{equation}

\section{Optimization for SABRSD\label{sec:SABRSD-code}}

In SABRSD, we minimize $\left\Vert \boldsymbol{I}-\boldsymbol{R}\hat{\boldsymbol{R}}^{-1}\right\Vert $
subjective to $\min\kappa(\boldsymbol{R}\hat{\boldsymbol{R}}^{-1})$.
This optimization is highly nonlinear, and we solved it using the
\textsf{fmincon} function in MATLAB's Global Optimization Toolbox
\cite{MATLAB2020b}. For the $s$-stage GL scheme, $\hat{\boldsymbol{R}}$
has $s\left(s-1\right)/2$ variables for the strictly upper-triangular
part and one variable in the diagonal. Since $\kappa(\boldsymbol{R}\hat{\boldsymbol{R}}^{-1})$
is invariant of scaling of $\hat{\boldsymbol{R}}$, we perform the
optimization in two steps. First, we solved for $\hat{\boldsymbol{R}}_{*}=\arg\min_{\hat{\boldsymbol{R}}}\kappa(\boldsymbol{R}\hat{\boldsymbol{R}}^{-1})$
while fixing the diagonal entries to be $1$. In the second step,
we solve for a scalar factor $\alpha\in(0,1]$ by minimizing $\left\Vert \boldsymbol{I}-\boldsymbol{R}\left(\alpha\hat{\boldsymbol{R}}_{*}\right)^{-1}\right\Vert =\left\Vert \boldsymbol{I}-\frac{1}{\alpha}\boldsymbol{R}\hat{\boldsymbol{R}}_{*}^{-1}\right\Vert $
for $\hat{\boldsymbol{R}}_{*}$ obtained from the first step. For
both steps, we use $10^{-12}$ as the relative convergence tolerances
by default, but a larger threshold (e.g., $10^{-8}$) may also suffice.
 We verified the procedure for the GL schemes with up to 11 stages.

\end{document}